\newtheorem{theorem}{Theorem}[section]
\newtheorem{lemma}[theorem]{Lemma}
\newtheorem{definition}[theorem]{Definition}
\begin{document}
\title[Moduli space]{The moduli space of Type~$\mathcal{A}$ surfaces with torsion
and non-singular symmetric Ricci tensor}
\author{Peter B Gilkey}
\address{Mathematics Department, University of Oregon, Eugene OR 97403 USA}
\email{gilkey@uoregon.edu}
\keywords{Ricci tensor, moduli space, locally homogeneous affine surface, connection with torsion, orbifold singularity}
\subjclass[2010]{53C21}
\begin{abstract} We examine the moduli spaces of Type~$\mathcal{A}$ connections on oriented and
unoriented surfaces both with
and without torsion in relation to the signature of the associated symmetric Ricci tensor $\rho_s$.
If the signature of $\rho_s$ is $(1,1)$ or $(0,2)$,
the spaces are smooth. If the signature is $(2,0)$, there is an orbifold singularity.
\end{abstract}
\maketitle
\section{Introduction}
Let $\nabla$ be a connection on the tangent bundle of a smooth surface $M$. Introduce a system of local
coordinates $x:=(x^1,x^2)$ on $M$ to expand $\nabla_{\partial_{x^i}}\partial_{x^j}=\Gamma_{ij}{}^k\partial_{x^k}$ where
we adopt the {\it Einstein convention} and sum over repeated indices. 
The components of the curvature operator $R$ are given by:
$$
R_{ijk}{}^l=\partial_{x_i}\Gamma_{jk}{}^l-\partial_{x^j}\Gamma_{ik}{}^l
+\Gamma_{in}{}^l\Gamma_{jk}{}^n-\Gamma_{jn}{}^l\Gamma_{ik}{}^n\,.
$$
We say that $\mathcal{M}$ is {\it symmetric} if $\nabla R=0$.
Let $\rho(x,y):=\operatorname{Tr}\{z\rightarrow R(z,x)y\}$ be the {\it Ricci tensor} and
let $\nabla\rho$ be the covariant derivative of the Ricci tensor. Although the Ricci tensor is a symmetric 2-tensor
in the Riemannian setting, it need not be symmetric in this more general setting and we define the {\it symmetric Ricci tensor} by setting
$\rho_s(X,Y):=\frac12\{\rho(X,Y)+\rho(Y,X)\}$. We say that $\mathcal{M}:=(M,\nabla)$ is 
{\it locally homogeneous} if given any two points $P$ and $Q$ of $M$, there is the germ of a local diffeomorphism
$\Phi_{P,Q}$ taking $P$ to $Q$ which commutes with with $\nabla$. 

We say that $\nabla$ is {\it torsion free} if
$\nabla_XY-\nabla_YX-[X,Y]=0$; this symmetry is equivalent to the symmetry $\Gamma_{ij}{}^k=\Gamma_{ji}{}^k$.
Torsion free connections on surfaces have been used to construct new examples of pseudo-Riemannian metrics exhibiting properties without Riemannian counterpart \cite{CGGV09, CGV10, De, KoSe}.

The theory of connections with torsion plays an important role in string theory \cite{A06,FI02,GMW04,I04},
they are important in almost contact geometry \cite{FI03,IM14,M12,S12}, they play a role in non-integrable
geometries \cite{A06,ACF05,ACFH15,B15}, they are important in spin geometries \cite{K10}, they are useful in
considering almost hypercomplex geometries \cite{M11}, they appear in the study of
compact solvmanifolds \cite{F15}, and they have been used to study the non-commutative residue
for manifolds with boundary \cite{WWY14}. The following result was first proved in the torsion free
setting by  Opozda \cite{Op04} and subsequently extended to 
surfaces with torsion by Arias-Marco and Kowalski \cite{AMK08}, see also 
 \cite{AMK08, Du, G-SG, KVOp2, KVOp, Opozda} for related work.
\begin{theorem}
Let $\mathcal{M}=(M,\nabla)$ be a locally homogeneous surface where $\nabla$
can have torsion. Then at least one of the following
three possibilities, which are not exclusive, hold which describe the local geometry:
\begin{itemize}
\item[($\mathcal{A}$)] There exists a coordinate atlas so the Christoffel symbols
$\Gamma_{ij}{}^k$ are constant.
\item[($\mathcal{B}$)] There exists a coordinate atlas so the Christoffel symbols have the form
$\Gamma_{ij}{}^k=(x^1)^{-1}C_{ij}{}^k$ for $C_{ij}{}^k$ constant and $x^1>0$.
\item[($\mathcal{C}$)] $\nabla$ is the Levi-Civita connection of a metric of constant Gauss
curvature.
\end{itemize}\end{theorem}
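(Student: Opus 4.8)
\textit{Outline of a proof.} The plan is to organize the classification around the Lie algebra $\mathfrak{g}$ of germs at a fixed point $p\in M$ of \emph{infinitesimal affine symmetries}, i.e.\ vector fields $X$ with $\mathcal{L}_X\nabla=0$, splitting into cases by the size of the isotropy. Such an $X$ solves a prolonged linear second-order system, so it is determined by its $1$-jet $\bigl(X(p),(\nabla X)(p)\bigr)\in T_pM\oplus\operatorname{End}(T_pM)$; hence $\dim\mathfrak{g}\le 6$. Local homogeneity is precisely the surjectivity of $\operatorname{ev}_p\colon\mathfrak{g}\to T_pM$, so $d:=\dim\mathfrak{g}\ge2$ and the isotropy subalgebra $\mathfrak{h}:=\ker\operatorname{ev}_p$ has dimension $k:=d-2\ge0$. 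The isotropy representation $\lambda\colon\mathfrak{h}\to\operatorname{End}(T_pM)$, $Z\mapsto(\nabla Z)(p)$, is injective (an affine symmetry vanishing to first order at $p$ is identically zero), and every element of $\lambda(\mathfrak{h})$ annihilates the curvature $R$, the torsion $T$, and all their covariant derivatives at $p$. This rigidity drives the case $k\ge1$.

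\emph{The case $k=0$.} Here $\mathfrak{g}$ is a two-dimensional Lie algebra acting simply transitively near $p$, and there are exactly two isomorphism types. If $\mathfrak{g}$ is abelian, pass to coordinates in which the local group acts by translations, so $\partial_{x^1},\partial_{x^2}$ are symmetries; then $\mathcal{L}_{\partial_{x^i}}\nabla=0$ reads $\partial_{x^i}\Gamma_{jk}{}^l=0$ and we are in Case~($\mathcal{A}$). If $\mathfrak{g}\cong\mathfrak{aff}(1,\mathbb{R})$, choose a basis with $[Y,X]=Y$ and realize it on $\{x^1>0\}$ as $Y=\partial_{x^2}$ and $X=x^1\partial_{x^1}+x^2\partial_{x^2}$, which span the tangent space there; then $\mathcal{L}_Y\nabla=0$ forces $\Gamma_{jk}{}^l$ to be independent of $x^2$ and $\mathcal{L}_X\nabla=0$ yields the Euler relation $x^1\partial_{x^1}\Gamma_{jk}{}^l=-\Gamma_{jk}{}^l$, so $\Gamma_{jk}{}^l=(x^1)^{-1}C_{jk}{}^l$ with $C_{jk}{}^l$ constant: Case~($\mathcal{B}$).

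\emph{The case $k\ge1$.} Choose $0\ne A\in\lambda(\mathfrak{h})\subseteq\operatorname{End}(T_pM)$ and distinguish according to whether $\lambda(\mathfrak{h})$ leaves a line of $T_pM$ invariant. If it does, one combines the isotropy with a symmetry whose value at $p$ spans that line, uses that $\lambda(\mathfrak{h})$ kills $R$ and $T$, and produces adapted coordinates in which the Christoffel symbols are either constant or homogeneous of degree $-1$, so again Case~($\mathcal{A}$) or ($\mathcal{B}$). If $\lambda(\mathfrak{h})$ acts irreducibly on $T_pM\cong\mathbb{R}^2$, then every $\mathfrak{h}$-invariant symmetric bilinear form on $T_pM$ is $0$ or definite (an indefinite invariant form would have $\mathfrak{h}$-invariant null lines), and every invariant alternating form is a constant multiple of a fixed area form; in particular $\rho_s(p)$ is $0$ or definite. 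If $\rho_s(p)$ is definite, then $g:=\varepsilon\,\rho_s$ with $\varepsilon=\pm1$ is a metric; $\nabla$ must then be its Levi--Civita connection, since the contorsion difference tensor is $\mathfrak{h}$-invariant and is annihilated by the Bianchi-type identities in dimension two; and $g$ has constant Gauss curvature by homogeneity. Thus we are in Case~($\mathcal{C}$). If $\rho_s(p)=0$, the remaining curvature of a two-dimensional connection is governed by its alternating part alone, and a short computation returns Case~($\mathcal{A}$).

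The decisive obstacle is the irreducible situation above: one must show that irreducibility of the isotropy, together with the first and second Bianchi identities specialized to surfaces, genuinely forces $\nabla$ to be the Levi--Civita connection of $\varepsilon\rho_s$ of constant curvature, and, in the presence of torsion, that the contorsion either vanishes or is absorbed so that the geometry still fits one of the three listed models. A secondary difficulty is the reducible situation, where producing the adapted coordinates requires a structural analysis of $\mathfrak{g}$ rather than an obvious two-dimensional transitive subalgebra. That the dichotomy has two genuine sides is illustrated by the round $S^2$ with symmetry algebra $\mathfrak{so}(3)$: its isotropy $\mathfrak{so}(2)$ acts irreducibly, which is precisely why it lands in Case~($\mathcal{C}$). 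Everything else is routine manipulation of the prolonged symmetry equations and of the structure of two-dimensional Lie algebras.
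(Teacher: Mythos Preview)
The paper does not supply its own proof of this theorem: it is quoted as background, attributed to Opozda in the torsion-free case and to Arias-Marco and Kowalski in the general case, and then used without further argument. So there is no in-paper proof to compare against; your outline is an attempt at something the paper deliberately outsources.

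As an outline, your $k=0$ case is clean and essentially complete: the dichotomy between the two two-dimensional Lie algebras and the passage to coordinates with constant or $(x^1)^{-1}$-homogeneous Christoffel symbols is correct and matches the spirit of the cited classifications. The $k\ge1$ case, however, is where the real work lies, and you openly flag both branches as unfinished. In the irreducible subcase your key step---``the contorsion difference tensor is $\mathfrak{h}$-invariant and is annihilated by the Bianchi-type identities in dimension two''---is not an argument: $\mathfrak{h}$-invariance of a $(1,2)$-tensor under an irreducible $\mathfrak{so}(2)$-action does not force it to vanish (the torsion could a priori sit in a one-dimensional invariant piece, e.g.\ a multiple of the area form times the identity), and nothing you wrote rules this out. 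Likewise, when $\rho_s(p)=0$ you assert ``a short computation returns Case~($\mathcal{A}$)'' without indicating what that computation is. In the reducible subcase you say only that one ``produces adapted coordinates,'' which is exactly the nontrivial structural step in the Arias-Marco--Kowalski argument.

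In short: the skeleton is right, and it is broadly how the cited references proceed, but the proposal is an outline with its hardest steps left as acknowledged gaps rather than a proof. If you want to turn it into one, the substantive missing pieces are (i) a genuine argument that irreducible isotropy on a surface forces the torsion to vanish, and (ii) the coordinate construction in the reducible $k\ge1$ case.
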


Such a surface which is not flat is said to be of Type-$\mathcal{A}$, Type-$\mathcal{B}$, or Type-$\mathcal{C}$ depending upon which
of the possibilities hold in this result. These classes are not disjoint. While
there are no surfaces which are both Type-$\mathcal{A}$ and Type-$\mathcal{C}$,
there are surfaces which are both Type-$\mathcal{A}$ and Type-$\mathcal{B}$
and there are surfaces which are both Type-$\mathcal{B}$ and Type-$\mathcal{C}$.
We refer to the discussion in \cite{BGGP16} for further details.

We shall work in the Type~$\mathcal{A}$ setting for the remainder of this paper and
shall let $\mathcal{M}:=(M,\nabla)$ be a locally homogeneous Type~$\mathcal{A}$
surface with torsion. In this setting, the Christoffel symbols
$$
\Gamma=\{\Gamma_{11}{}^1,\Gamma_{11}{}^2,\Gamma_{12}{}^1,\Gamma_{21}{}^1,
\Gamma_{12}{}^2,\Gamma_{21}{}^2,\Gamma_{22}{}^1,\Gamma_{22}{}^2\}
$$
belong to $\mathbb{R}^8$. For $p+q=2$, let $\mathcal{W}(p,q)\subset\mathbb{R}^8$ be the open subset of Christoffel symbols
defining a Type~$\mathcal{A}$ structure such that the associated symmetric Ricci tensor $\rho_{s,\Gamma}$
is non-degenerate and of signature $(p,q)$;
in contrast to the torsion free setting, the Ricci tensor can be non-symmetric and thus it is necessary to deal with the symmetrization
which was defined previously.
Let $\mathfrak{W}^+(p,q)$ (resp. $\mathfrak{W}(p,q)$) be the corresponding moduli space of oriented (resp. unoriented)
Type~$\mathcal{A}$ structures.

The general linear group $\operatorname{GL}(2,\mathbb{R})$ acts on $\mathcal{W}(p,q)$ and on $\mathcal{Z}(p,q)$
in the obvious fashion by changing the basis for $\mathbb{R}^2$; we shall denote this action by $g\Gamma$. Let 
$\operatorname{GL}^+(2,\mathbb{R})\subset\operatorname{GL}(2,\mathbb{R})$ be the connected
component of the identity; these are the matrices with positive determinant. The action of the general linear group completely
determines the isomorphism type in this setting.

\begin{theorem}
$\mathfrak{Z}^+(p,q)=\mathcal{Z}(p,q)/\operatorname{GL}^+(2,\mathbb{R})$,\hglue .45cm
$\mathfrak{W}^+(p,q)=\mathcal{W}(p,q)/\operatorname{GL}^+(2,\mathbb{R})$,
\medbreak\hglue 2.1cm
$\mathfrak{Z}(p,q)=\mathcal{Z}(p,q)/\operatorname{GL}(2,\mathbb{R})$,\hglue 1cm
$\mathfrak{W}(p,q)=\mathcal{W}(p,q)/\operatorname{GL}(2,\mathbb{R})$.
\end{theorem}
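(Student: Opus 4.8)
The plan is to produce, in each of the four cases, a natural bijection between the moduli space and the quotient on the right; the four arguments are formally the same, so I will describe $\mathfrak{W}^+(p,q)=\mathcal{W}(p,q)/\operatorname{GL}^+(2,\mathbb{R})$ and only indicate the routine modifications for the others (for the $\mathfrak{Z}$-statements one additionally observes that both the $\operatorname{GL}(2,\mathbb{R})$-action and affine isomorphisms preserve the vanishing of the torsion, so the whole discussion restricts to the torsion-free locus). By the definition of Type~$\mathcal{A}$, every oriented Type~$\mathcal{A}$ structure whose symmetric Ricci tensor is non-degenerate of signature $(p,q)$ is represented by some $\Gamma\in\mathcal{W}(p,q)$, realised on $\mathbb{R}^2$ with its standard orientation; this gives surjectivity. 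Well-definedness is the easy direction: for $g\in\operatorname{GL}(2,\mathbb{R})$ a linear change of coordinates of $\mathbb{R}^2$ implements the action $\Gamma\mapsto g\Gamma$ (the second-derivative term in the transformation rule for $\Gamma_{ij}{}^k$ drops out because the change is linear) and preserves orientation precisely when $\det g>0$; hence $\Gamma$ and $g\Gamma$ define isomorphic oriented Type~$\mathcal{A}$ structures when $g\in\operatorname{GL}^+(2,\mathbb{R})$.

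For injectivity, suppose $\Gamma,\Gamma'\in\mathcal{W}(p,q)$ define isomorphic oriented structures via an orientation-preserving germ $\Phi$ of a diffeomorphism with $\Phi_*\nabla_\Gamma=\nabla_{\Gamma'}$. Because $\Gamma$ and $\Gamma'$ are constant, translations of $\mathbb{R}^2$ are orientation-preserving affine automorphisms of the respective connections, so after composing $\Phi$ with suitable translations on the source and target I may assume $\Phi(0)=0$; set $A:=d\Phi_0\in\operatorname{GL}^+(2,\mathbb{R})$. The curvature operator, the Ricci tensor, and its symmetrization are all natural under diffeomorphisms, so $\Phi_*\nabla_\Gamma=\nabla_{\Gamma'}$ yields $\Phi_*\rho_{s,\Gamma}=\rho_{s,\Gamma'}$; that is, $\Phi$ is an isometry between two pseudo-Riemannian metrics of signature $(p,q)$ whose components, being polynomial in the constant Christoffel symbols, are constant in the chosen coordinates. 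The Levi-Civita connection of a constant-coefficient metric has vanishing Christoffel symbols, so $\Phi$ intertwines the Levi-Civita connection of $\rho_{s,\Gamma}$ with that of $\rho_{s,\Gamma'}$, both of which vanish, and the transformation rule then forces the second derivatives of $\Phi$ to vanish identically near $0$. Hence $\Phi$ is the germ of an affine map, and $\Phi(0)=0$ makes it linear, $\Phi(x)=Ax$. Substituting back, $\nabla_{\Gamma'}=\Phi_*\nabla_\Gamma=\nabla_{A\Gamma}$, and since a connection on $\mathbb{R}^2$ is determined by its Christoffel symbols we conclude $\Gamma'=A\Gamma$ with $A\in\operatorname{GL}^+(2,\mathbb{R})$, so $\Gamma$ and $\Gamma'$ lie in one $\operatorname{GL}^+(2,\mathbb{R})$-orbit. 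This proves injectivity; deleting "orientation-preserving" everywhere gives the $\operatorname{GL}(2,\mathbb{R})$-quotients, and restricting to the torsion-free locus gives the two $\mathfrak{Z}$-statements.

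I expect the rigidity step of the second paragraph to be the heart of the matter, and the place where the hypothesis is genuinely used is the non-degeneracy of $\rho_s$: it is exactly because $\rho_{s,\Gamma}$ is a bona fide (flat) metric that an affine isomorphism of the connections is forced to be linear once its base point is normalized. The assumption cannot be removed --- a flat Type~$\mathcal{A}$ connection can admit a non-affine coordinate change that alters its constant Christoffel symbols and lands it in a different $\operatorname{GL}(2,\mathbb{R})$-orbit --- so the signature condition is essential rather than a convenience. Finally one should note that both reductions used above (translating the base point to the origin, and recognizing an isometry of a constant-coefficient metric as an affine map) are purely local and hence valid for germs, the latter because the geodesics of a constant-coefficient metric are straight lines.
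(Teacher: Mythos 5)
Your proposal is correct and follows essentially the same route as the paper: both arguments hinge on the observation that the non-degenerate symmetric Ricci tensor is a constant-coefficient flat pseudo-Riemannian metric in Type~$\mathcal{A}$ coordinates, so any connection-preserving transition map is an isometry of such metrics and hence affine, after which translations are discarded because they do not change constant Christoffel symbols. Your write-up merely supplies the details (normalizing the base point, the Levi-Civita argument for affineness, orientation and torsion-free compatibility) that the paper leaves implicit.
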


\begin{proof}
Let $\mathcal{M}=(M,\nabla)$ be a Type~$\mathcal{A}$ surface with non-degenerate symmetric Ricci tensor.
Choose local coordinate charts for $M$ where the 
Christoffel symbols $\Gamma_{ij}{}^k$ of $\nabla$ are locally constant. Then
\begin{eqnarray*}
&&R_{ijk}{}^l=\Gamma_{in}{}^l\Gamma_{jk}{}^n-\Gamma_{jn}{}^l\Gamma_{ik}{}^n,\qquad
\rho_{jk}=\Gamma_{in}{}^i\Gamma_{jk}{}^n-\Gamma_{jn}{}^i\Gamma_{ik}{}^n,\\
&&\rho_{s,jk}=\textstyle\frac12\{\Gamma_{in}{}^i\Gamma_{jk}{}^n-\Gamma_{jn}{}^i\Gamma_{ik}{}^n+
\Gamma_{in}{}^i\Gamma_{kj}{}^n-\Gamma_{kn}{}^i\Gamma_{ij}{}^n\}\,.
\end{eqnarray*}
The symmetric Ricci tensor gives an invariantly defined flat 
pseudo-Riemannian metric. 
Therefore, the transition functions between such charts are affine. The desired result follows since translations do
not affect $\Gamma$.
\end{proof}

There is an exceptional structure. Define $\Gamma_0\in\mathcal{Z}(2,0)$ and $C_0\subset\mathcal{Z}(2,0)$ by setting:
\begin{equation}\label{E1.a}\begin{array}{l}
\Gamma_{0;11}{}^1=-1,\Gamma_{0;12}{}^2=\Gamma_{0;21}{}^2=\Gamma_{0;22}{}^1=1,\,\,\Gamma_{0;ij}{}^k=0\text{ otherwise},\\[0.05in]
C_0:=\Gamma_0\cdot\operatorname{GL}(2,\mathbb{R})=\Gamma_0\cdot\operatorname{GL}^+(2,\mathbb{R})
\subset\mathcal{Z}(2,0)\,.
\end{array}\end{equation}
We will show presently in Lemma~\ref{L2.2} that $C_0$ is a closed set and, furthermore,
that any non-trivial fixed point of the action of $\operatorname{GL}^+(2,\mathbb{R})$ on
$\mathcal{Z}(p,q)$ belongs to this orbit; thus in particular, $\operatorname{GL}^+(2,\mathbb{R})$
acts without fixed points on $\mathcal{Z}(1,1)$ and on $\mathcal{Z}(0,2)$. To exclude this exceptional orbit, we define:
\begin{eqnarray*}
&&\tilde{\mathcal{Z}}(p,q):=\left\{\begin{array}{lll}
\mathcal{Z}(p,q)&\text{ if }(p,q)\ne(2,0)\\
\mathcal{Z}(p,q)-C_0&\text{ if }(p,q)=(2,0)
\end{array}\right\},\\
&&\tilde{\mathcal{W}}(p,q):=\left\{\begin{array}{ll}
\mathcal{W}(p,q)&\text{ if }(p,q)\ne(2,0)\\
\mathcal{W}(p,q)-C_0&\text{ if }(p,q)=(2,0)
\end{array}\right\},\\
&&\tilde{\mathfrak{Z}}^+(p,q):=\tilde{\mathcal{Z}}(p,q)/\operatorname{GL}^+(2,\mathbb{R}),
\qquad\quad\tilde{\mathfrak{Z}}(p,q):=\tilde{\mathcal{Z}}(p,q)/\operatorname{GL}(2,\mathbb{R}),\\
&&\tilde{\mathfrak{W}}^+(p,q):=\tilde{\mathcal{W}}(p,q)/\operatorname{GL}^+(2,\mathbb{R}),
\qquad\tilde{\mathfrak{W}}(p,q):=\tilde{\mathcal{W}}(p,q)/\operatorname{GL}(2,\mathbb{R})\,.
\end{eqnarray*}

\begin{theorem}\label{T1.3}
\ \begin{enumerate}
\item $\tilde{\mathfrak{Z}}^+(p,q)$  (resp. $\tilde{\mathfrak{W}}^+(p,q)$) is a smooth manifold of dimension $2$ (resp. $4$) and
$\tilde{\mathcal{Z}}(p,q)\rightarrow\tilde{\mathfrak{Z}}^+(p,q)$
(resp. $\tilde{\mathcal{W}}(p,q)\rightarrow \tilde{\mathfrak{W}}^+(p,q)$) is a $\operatorname{GL}^+(2,\mathbb{R})$ principal bundle.
\item $\tilde{\mathfrak{Z}}(p,q)$ (resp. $\tilde{\mathfrak{W}}(p,q)$) is a smooth manifold with boundary
(resp. without boundary) of dimension $2$ (resp. $4$). Furthermore,
$\tilde{\mathfrak{Z}}^+(p,q)\rightarrow\tilde{\mathfrak{Z}}(p,q)$
(resp. $\tilde{\mathfrak{W}}^+(p,q)\rightarrow\tilde{\mathfrak{W}}(p,q)$) is a ramified double cover.
\end{enumerate}\end{theorem}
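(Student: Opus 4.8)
The plan is to derive part~(1) from the quotient manifold theorem for free, proper Lie group actions, and part~(2) from an analysis of the residual involution induced by $\operatorname{GL}(2,\mathbb{R})/\operatorname{GL}^+(2,\mathbb{R})\cong\mathbb{Z}/2$.

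For part~(1), the first step is to show that $\operatorname{GL}^+(2,\mathbb{R})$ acts freely and properly on $\tilde{\mathcal{Z}}(p,q)$ and on $\tilde{\mathcal{W}}(p,q)$. Freeness is exactly the content of Lemma~\ref{L2.2}: the only orbit carrying a non-trivial $\operatorname{GL}^+(2,\mathbb{R})$-stabilizer is the exceptional orbit $C_0\subset\mathcal{Z}(2,0)$, which is precisely what has been excised in passing to the tilded spaces. Properness is the main analytic point. I would use the $\operatorname{GL}(2,\mathbb{R})$-equivariant map $\Gamma\mapsto\rho_{s,\Gamma}$ into the space of non-degenerate symmetric bilinear forms of signature $(p,q)$: for $(p,q)=(2,0)$ or $(0,2)$ the isotropy group there is the compact group $O(2)$, so that action is proper and, via the sequence criterion, already controls the $\operatorname{GL}^+(2,\mathbb{R})$-variable; for signature $(1,1)$ the isotropy $O(1,1)$ is non-compact and a residual hyperbolic one-parameter ambiguity (the stabilizer $\operatorname{diag}(\lambda,\lambda^{-1})$ of a $(1,1)$-form) survives, which I would kill using a second independent invariant that is constant on a Type~$\mathcal{A}$ structure but transforms with a different $\operatorname{GL}^+(2,\mathbb{R})$-weight, e.g. $\nabla\rho$ or the cubic component of $\Gamma$ in $\otimes^2V^\ast\otimes V$. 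Granting properness, the quotient manifold theorem yields that $\tilde{\mathcal{Z}}(p,q)\to\tilde{\mathfrak{Z}}^+(p,q)$ and $\tilde{\mathcal{W}}(p,q)\to\tilde{\mathfrak{W}}^+(p,q)$ are principal $\operatorname{GL}^+(2,\mathbb{R})$-bundles over smooth manifolds of dimension $6-4=2$ and $8-4=4$. (Alternatively, and probably how one would actually organize the computation, exhibit an explicit $2$-, resp.\ $4$-, parameter family of normal forms meeting every orbit once and transversally; this simultaneously gives properness, smoothness of the quotient, and local triviality of the bundle.)

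For part~(2), let $g_0=\operatorname{diag}(1,-1)$ represent the nontrivial class in $\operatorname{GL}(2,\mathbb{R})/\operatorname{GL}^+(2,\mathbb{R})$. Since $g_0$ normalizes $\operatorname{GL}^+(2,\mathbb{R})$ and $g_0^2=\operatorname{id}$, it descends to an involution $\tau$ of $\tilde{\mathfrak{Z}}^+(p,q)$ and of $\tilde{\mathfrak{W}}^+(p,q)$, and by construction $\tilde{\mathfrak{Z}}(p,q)=\tilde{\mathfrak{Z}}^+(p,q)/\langle\tau\rangle$, $\tilde{\mathfrak{W}}(p,q)=\tilde{\mathfrak{W}}^+(p,q)/\langle\tau\rangle$. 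A point $[\Gamma]$ is fixed by $\tau$ precisely when the full $\operatorname{GL}(2,\mathbb{R})$-stabilizer of $\Gamma$ contains an orientation-reversing element; any such element has order two and is $\operatorname{GL}(2,\mathbb{R})$-conjugate to $g_0$, so after a change of basis $\operatorname{Fix}(\tau)$ consists of the orbits of the $g_0$-invariant Christoffel tensors, those with $\Gamma_{11}{}^2=\Gamma_{12}{}^1=\Gamma_{21}{}^1=\Gamma_{22}{}^2=0$. As $\tau$ is a smooth involution, $\operatorname{Fix}(\tau)$ is a smooth submanifold and near it $\tau$ is linear with normal space the $(-1)$-eigenspace of $d\tau$. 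The key local computation is the rank of that normal space: the infinitesimal orbit map $\xi\mapsto\xi\cdot\Gamma$ is equivariant for $\operatorname{Ad}(g_0)$ on $\mathfrak{gl}(2,\mathbb{R})$ and $g_0$ on the ambient tensor space, and since the action is free it embeds the $2$-dimensional centralizer of $g_0$ into the $(+1)$-eigenspace of $g_0$ and the $2$-dimensional anticentralizer into the $(-1)$-eigenspace; because the $(-1)$-eigenspace of $g_0$ on $\otimes^2V^\ast\otimes V$ has dimension $3$ on the torsion-free subspace and dimension $4$ on all of $\mathbb{R}^8$, the induced $(-1)$-eigenspace of $d\tau$ on the $2$-, resp.\ $4$-, dimensional quotient has dimension $3-2=1$, resp.\ $4-2=2$.

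It then remains to read off the local model of the quotient. For $\tilde{\mathfrak{Z}}^+(p,q)$, near the fixed locus $\tau$ acts on $\mathbb{R}^2$ with eigenvalues $(+1,-1)$, so $\tilde{\mathfrak{Z}}(p,q)$ is locally $\mathbb{R}\times[0,\infty)$: a smooth $2$-manifold with boundary equal to the image of $\operatorname{Fix}(\tau)$, and $\tilde{\mathfrak{Z}}^+(p,q)\to\tilde{\mathfrak{Z}}(p,q)$ is the double cover branched along this boundary. For $\tilde{\mathfrak{W}}^+(p,q)$, near the fixed locus $\tau$ acts on $\mathbb{R}^4=\mathbb{R}^2\oplus\mathbb{R}^2$ with eigenvalues $(+1,+1,-1,-1)$; identifying the $(-1)$-summand with $\mathbb{C}$, the chart $z\mapsto z^2$ realises $\mathbb{C}/\{\pm1\}$ as a smooth plane, so $\tilde{\mathfrak{W}}(p,q)$ is a smooth $4$-manifold without boundary and $\tilde{\mathfrak{W}}^+(p,q)\to\tilde{\mathfrak{W}}(p,q)$ is a double cover ramified to order two over the image of $\operatorname{Fix}(\tau)$; intuitively the torsion directions double the $g_0$-anti-invariant normal directions, converting the boundary of $\tilde{\mathfrak{Z}}$ into a ramification locus with no boundary for $\tilde{\mathfrak{W}}$. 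I expect the two hardest points to be the properness argument in the signature $(1,1)$ case of part~(1), and, in part~(2), checking that these local complex-square charts patch to a genuine smooth structure on $\tilde{\mathfrak{W}}(p,q)$ along the entire ramification locus, i.e. that the normal bundle of $\operatorname{Fix}(\tau)$ carries the complex structure needed to make the squaring construction global.
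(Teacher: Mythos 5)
Your overall architecture coincides with the paper's: Assertion~(1) is derived from the slice/quotient theorem for a free and proper action of $\operatorname{GL}^+(2,\mathbb{R})$ (freeness being Lemma~\ref{L2.2} after excising $C_0$), and Assertion~(2) from the involution induced by an orientation-reversing element, whose fixed locus has codimension $1$ in $\tilde{\mathfrak{Z}}^+(p,q)$ and codimension $2$ in $\tilde{\mathfrak{W}}^+(p,q)$, yielding the local models $(\eta^1,\eta^2)\mapsto(\eta^1,(\eta^2)^2)$ (a boundary) and $z\mapsto z^2$ (a ramification locus without boundary). Your eigenspace count ($3-2=1$ torsion free, $4-2=2$ with torsion) reproduces Lemma~\ref{L3.1}, and your concern about globalizing the complex structure on the normal bundle is dealt with in the paper simply by noting the construction is local; averaging to obtain an invariant metric makes the fixed set a smooth totally geodesic submanifold, exactly as you assert.

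The one genuine gap is properness of the action in signature $(1,1)$, which is the real analytic content of Lemma~\ref{L2.2} and which you leave as a plan rather than a proof. After the Gram--Schmidt normalization $\rho_{s,\Gamma_n}=\rho_{s,g_n\Gamma_n}=\varrho$ one is reduced to $g_n=\operatorname{diag}(a_n,a_n^{-1})$ in null coordinates, and your proposal to kill the case $|a_n|\to\infty$ by ``a second independent invariant'' such as $\nabla\rho$ is not carried out; moreover it cannot succeed without invoking non-degeneracy of $\rho_s$ somewhere, since the hyperbolic subgroup genuinely has non-proper orbits on the full space of constant Christoffel symbols (if only negative-weight components of $\Gamma$ are non-zero then $g_n\Gamma\to 0$). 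The paper's argument is more direct and closes the loop with the Ricci tensor itself: the components transform as $(g_n\Gamma_n)_{ij}{}^k=a_n^{\epsilon_{ijk}}\Gamma_{n,ij}{}^k$ with odd weights $\epsilon_{ijk}\in\{\pm1,\pm3\}$, so convergence of both $\Gamma_n$ and $g_n\Gamma_n$ with $|a_n|\to\infty$ forces every positive-weight component of the limit to vanish, namely $\Gamma_{11}{}^1=\Gamma_{11}{}^2=\Gamma_{12}{}^2=\Gamma_{21}{}^2=0$, and a direct computation then gives $\rho_{11}=\rho_{12}=\rho_{21}=0$, contradicting non-degeneracy. You should replace your sketch by this (or an equivalent) argument; the rest of your proposal is sound.
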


Section~\ref{S2} is devoted to the proof of Assertion~2 and Section~\ref{S3} is devoted to the proof of Assertion~3.
The analysis is quite
different in the unoriented context as the full general linear group $\operatorname{GL}(2,\mathbb{R})$ has a 1-dimensional fixed point
set acting on $\mathcal{Z}(p,q)$ and a 2-dimensional fixed point set acting on $\mathcal{W}(p,q)$ for $(p,q)\in\{(2,0),(1,1),(0,2)\}$.
We will discuss the ramification sets in Section~\ref{S3.3} and in Section~\ref{S3.5} once the necessary notation has been developed. 
We must now consider the singular orbit $[\Gamma_0]$ when $(p,q)=(2,0)$. 
\begin{definition}\label{D1.4}
\rm Let $\mathbb{Z}_3:=\{1,\lambda,\lambda^2\}$ for $\lambda:=e^{2\pi\sqrt{-1}/3}$ be the cyclic group of order $3$ consisting of
the third roots of unity in $\mathbb{C}$. This group acts on $\mathbb{C}$ and on $\mathbb{C}^2$ by
complex multiplication. Since $\bar\lambda=\lambda^2$, complex conjugation defines a non-trivial automorphism
of the group $\mathbb{Z}_3$.
Thus complex multiplication by $\lambda$ and
complex conjugation generate a non-Abelian group of order $6$ which is isomorphic to the symmetric group $s_3$
which consists of the permutations of $3$ elements. This group acts naturally on $\mathbb{C}$ and on $\mathbb{C}^2$;
$\mathbb{Z}_3$ is a normal subgroup of $s_3$.
\end{definition}

We will establish the following result in Section~\ref{S4}.
\begin{theorem}\label{T1.5}
\ \begin{enumerate}
\item $\mathfrak{Z}^+(2,0)$ (resp. $\mathfrak{W}^+(2,0)$) is a smooth orbifold.
An orbifold coordinate chart near $[\Gamma_0]$ can be obtained by taking $\mathbb{C}$ (resp. $\mathbb{C}^2$)
modulo the action of $\mathbb{Z}_3$.
\item $\mathfrak{Z}(2,0)$ (resp. $\mathfrak{W}(2,0)$) is a smooth orbifold. An orbifold coordinate chart 
near $[\Gamma_0]$ can be defined by taking $\mathbb{C}$ (resp. $\mathbb{C}^2$)
modulo the action of $s_3$.
\end{enumerate}\end{theorem}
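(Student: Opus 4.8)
The plan is to localize everything at the single exceptional orbit $C_0=[\Gamma_0]$: on the complement of $[\Gamma_0]$ Theorem~\ref{T1.3} already provides a smooth manifold (resp.\ a smooth manifold with boundary), so it suffices to produce an orbifold chart near $[\Gamma_0]$, and this will come from the slice theorem once the isotropy group of $\Gamma_0$ and its linear action on a normal slice are computed. The first point is that $\operatorname{GL}(2,\mathbb{R})$, hence $\operatorname{GL}^+(2,\mathbb{R})$, acts \emph{properly} on $\mathcal{Z}(2,0)$ and on $\mathcal{W}(2,0)$: the map $\Gamma\mapsto\rho_{s,\Gamma}$ is $\operatorname{GL}(2,\mathbb{R})$-equivariant and takes values in the set of definite symmetric bilinear forms on $\mathbb{R}^2$, which is the homogeneous space $\operatorname{GL}(2,\mathbb{R})/\operatorname{O}(2)$; since $\operatorname{O}(2)$ is compact, $\operatorname{GL}(2,\mathbb{R})$ acts properly there, and an elementary point-set argument shows that a continuous equivariant map to a proper $G$-space pulls back a proper action. (This is exactly what is special about signature $(2,0)$: for signature $(1,1)$ the analogous target $\operatorname{GL}(2,\mathbb{R})/\operatorname{O}(1,1)$ carries a non-proper action.)

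Next I would compute the isotropy group $H:=\{g\in\operatorname{GL}(2,\mathbb{R}):g\Gamma_0=\Gamma_0\}$. A direct computation from \eqref{E1.a} shows that $\rho_{s,\Gamma_0}$ is a multiple of the identity, so after replacing $\Gamma_0$ by a point of the same orbit $C_0$ one has $H\subseteq\operatorname{O}(2)$, a compact group; as a closed subgroup it must be finite cyclic, finite dihedral, $\operatorname{SO}(2)$, or $\operatorname{O}(2)$. Solving $g\Gamma_0=\Gamma_0$ for $g\in\operatorname{O}(2)$ — most cleanly by identifying $(\mathbb{R}^2,\rho_{s,\Gamma_0})$ with $\mathbb{C}$ and observing that $\Gamma_0$ then lies in the weight-$3$ part of $(\mathbb{C}^\ast)^{\otimes2}\otimes\mathbb{C}$ — yields $H^+:=H\cap\operatorname{GL}^+(2,\mathbb{R})=\mathbb{Z}_3$; and since \eqref{E1.a} records $C_0=\Gamma_0\cdot\operatorname{GL}(2,\mathbb{R})=\Gamma_0\cdot\operatorname{GL}^+(2,\mathbb{R})$, the group $H$ must contain an orientation-reversing element, forcing $H=\mathbb{Z}_3\rtimes\mathbb{Z}_2\cong s_3$, the group of Definition~\ref{D1.4}. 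In particular $H$ is finite, so the orbit $C_0$, which is closed by Lemma~\ref{L2.2}, is a $4$-dimensional embedded submanifold.

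Palais' slice theorem for proper actions now supplies a $\operatorname{GL}^+(2,\mathbb{R})$-invariant tubular neighbourhood of $C_0$ equivariantly diffeomorphic to $\operatorname{GL}^+(2,\mathbb{R})\times_{H^+}V$, where $V$ is the isotropy representation of $H^+$ on the normal space to $C_0$ at $\Gamma_0$, of real dimension $6-4=2$ in the $\mathcal{Z}$-case and $8-4=4$ in the $\mathcal{W}$-case. Hence a neighbourhood of $[\Gamma_0]$ in $\mathfrak{Z}^+(2,0)$ (resp.\ $\mathfrak{W}^+(2,0)$) is $V/H^+$, and in $\mathfrak{Z}(2,0)$ (resp.\ $\mathfrak{W}(2,0)$) it is $V/H$. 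To identify $V$ I would use that $\mathbb{Z}_3$ has exactly one nontrivial real irreducible, namely the complex line on which $\lambda=e^{2\pi\sqrt{-1}/3}$ acts by multiplication, together with the fact that $V$ has no trivial summand: a nonzero $H^+$-fixed normal vector $v$ would make every $\Gamma_0+tv$ a nontrivial fixed point lying off $C_0$ (the slice being transverse to $C_0$), contradicting Lemma~\ref{L2.2}. Therefore $V\cong\mathbb{C}$ for $\mathcal{Z}$ and $V\cong\mathbb{C}^2$ for $\mathcal{W}$, with $H^+=\mathbb{Z}_3$ acting as complex multiplication by $\lambda$ (in the $\mathcal{W}$-case diagonally, by the same computation of the isotropy representation), and an orientation-reversing element of $H$ acts by complex conjugation — so $H$ acts as the $s_3$ of Definition~\ref{D1.4}. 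Combining these local models with the smooth structure on the complement of $[\Gamma_0]$ from Theorem~\ref{T1.3} exhibits all four spaces as smooth orbifolds with $[\Gamma_0]$ the unique orbifold point and the stated charts.

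The conceptual crux is the pair of facts in the first two steps: the action is proper and the isotropy group sits inside $\operatorname{O}(2)$. These are precisely what bring Palais' slice theorem into play and reduce the whole question to the finite quotients $V/H^+$ and $V/H$, and they are exactly what fails for the other signatures. What remains are two finite computations — that $H^+$ has order $3$, and that the induced $\mathbb{Z}_3$-action on the normal slice is the standard one on $\mathbb{C}$ (resp.\ the diagonal one on $\mathbb{C}^2$) — of which only the latter, keeping track of which complex character occurs on each normal line, takes any care; it is routine once the $\operatorname{SO}(2)$-weight decomposition of $(\mathbb{R}^2)^\ast\otimes(\mathbb{R}^2)^\ast\otimes\mathbb{R}^2$ has been written down, and in any case is harmless at the level of real orbifolds.
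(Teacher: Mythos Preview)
Your argument is correct and takes a genuinely different, more structural route than the paper. The paper works by explicit computation: it passes to the complex frame $\{f_1,f_2\}$ on $\mathbb{R}^2$, parametrises $\operatorname{GL}^+(2,\mathbb{R})$ as $T_{\beta_1,\beta_2}$, differentiates the one-parameter families $T_{1+t\beta_1,0}$ and $T_{1,t\beta_2}$ at $\Gamma_0$ to obtain the tangent space to $C_0$, writes down explicit transversal slices $s_{\mathcal{Z}}(\alpha)$ and $s_{\mathcal{W}}(\alpha_1,\alpha_2)$ by hand, and then verifies by direct substitution that $T_\lambda$ and complex conjugation preserve these slices and act by $\alpha\mapsto\lambda\alpha$ and $\alpha\mapsto\bar\alpha$. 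By contrast, you invoke Palais' slice theorem to produce an abstract slice $V$ and identify $V$ representation-theoretically: the linearity of the $\operatorname{GL}^+(2,\mathbb{R})$-action on the space of Christoffel symbols, together with Lemma~\ref{L2.2}, forces $V$ to have no trivial $\mathbb{Z}_3$-summand, and since $\mathbb{Z}_3$ has a unique nontrivial real irreducible the answer is determined.

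What each approach buys: the paper's computation is self-contained (no appeal to an external slice theorem), pins down the specific complex characters on each normal line, and produces the concrete coordinates used afterwards in \S\ref{S4.4}. Your argument is shorter, explains \emph{why} the model has to be $\mathbb{C}/\mathbb{Z}_3$ (resp.\ $\mathbb{C}^2/\mathbb{Z}_3$), and would transfer verbatim to any analogous situation once the isotropy is known. Two small caveats: first, your ``no trivial summand'' step for $\mathcal{W}(2,0)$ cites Lemma~\ref{L2.2}, whose \emph{statement} is only for $\mathcal{Z}(p,q)$; the \emph{proof} of that lemma, however, shows that any $\Gamma\in\mathcal{W}(p,q)$ with $G_\Gamma^+\ne\{\operatorname{id}\}$ is automatically torsion-free and lies in $C_0$, so the gap is purely cosmetic. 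Second, your parenthetical remark that properness is ``exactly what is special about signature $(2,0)$'' is a comment about your \emph{argument} rather than about the action --- Lemma~\ref{L2.2} shows properness holds for all signatures, it is only your equivariant-map shortcut via $\operatorname{GL}(2,\mathbb{R})/\operatorname{O}(2)$ that needs definiteness.
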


We note that although we shall work in the smooth category, these results continue to hold in the real analytic category.
We now turn to the question of invariants and, for the sake of completeness, present some results from M. Brozos-V\'{a}zquez
et al. \cite{BGGP16a}. We work in the torsion free setting. If $\Gamma\in\mathcal{Z}(p,q)$, let
$\text{dvol}$ be the oriented $2$-form that gives the pseudo-Riemannian volume element defined by $\rho$, let 
$\rho_{ij}^3:=\Gamma_{ik}{}^l\Gamma_{jl}{}^k$, let
$\psi_3:=\operatorname{Tr}_\rho\{\rho^3\}=\rho^{ij}\rho_{ij}^3$, let
$\Psi_3:=\det(\rho^3)/\det(\rho)$, and let
$\chi(\Gamma):=\rho(\Gamma_{ab}{}^b\Gamma_{ij}{}^k\rho^3_{kl}\rho^{ij}dx^a\wedge dx^l,\text{dvol})$.

\begin{theorem}\label{T1.6}
Let $\mathcal{M}=(M,\nabla)$ be a locally homogeneous torsion free surface of Type~$\mathcal{A}$ where
$\operatorname{Rank}\{\rho\}=2$.
\begin{enumerate}
\item $\psi_3$, $\Psi_3$, and $\chi$ are invariantly defined on $\mathcal{M}$ and does not depend on the particular choice of 
Type~$\mathcal{A}$ coordinates.
\item $\Xi(p,q):=(\psi_3,\Psi_3,\chi)$  is a 1-1 map from each $\mathfrak{Z}^+(p,q)$ to a closed surface
in $\mathbb{R}^3$ and provides a complete set of invariants in the oriented context.
\item $\Theta(p,q):=(\psi_3,\Psi_3)$ defines a 1-1 map from 
$\mathfrak{Z}(p,q)$ to a simply
connected closed subset $\mathfrak{V}(p,q)$ of $\mathbb{R}^2$ and provides a complete set of invariants in the unoriented context.
\end{enumerate}\end{theorem}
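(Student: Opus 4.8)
The plan is to prove Assertion~1 directly and to derive Assertions~2 and~3 from an explicit slice for the $\operatorname{GL}^+(2,\mathbb{R})$-action. For Assertion~1, observe first that in the torsion free Type~$\mathcal{A}$ setting the trace $1$-form $\Gamma_{ab}{}^b$ is constant, hence closed, so the Ricci tensor is symmetric and, having constant components in the given coordinates, is flat; thus, exactly as for the earlier quotient theorems, $\rho$ furnishes an invariant flat pseudo-Riemannian metric, Type~$\mathcal{A}$ coordinate systems differ by affine maps, translations leave the constant Christoffel symbols unchanged, and the inhomogeneous term in the transformation law of $\Gamma_{ij}{}^k$, which involves second derivatives of the transition, vanishes. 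Hence on this coordinate class $\Gamma$ transforms tensorially under the linear part $g\in\operatorname{GL}(2,\mathbb{R})$, so $\psi_3=\operatorname{Tr}(\rho^{-1}\rho^3)$ and $\Psi_3=\det(\rho^{-1}\rho^3)$ are the trace and determinant of the intrinsically defined endomorphism $\rho^{-1}\rho^3$ and are $\operatorname{GL}(2,\mathbb{R})$-invariant. The scalar $\chi$ is again a full contraction built from $\Gamma$ and $\rho$, but it also involves the volume form $\operatorname{dvol}$ of $\rho$, which changes sign under an orientation-reversing coordinate change; hence $\chi$ is only $\operatorname{GL}^+(2,\mathbb{R})$-invariant, is admissible solely in the oriented context, and this is why $\Theta$ retains only $(\psi_3,\Psi_3)$.

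For Assertions~2 and~3 the substantive step is a slice analysis in each signature $(p,q)$. Because $\operatorname{GL}^+(2,\mathbb{R})$ acts transitively on non-degenerate symmetric bilinear forms of a fixed signature, one normalises $\rho(\Gamma)$ to the standard model $\varepsilon$, leaving the $1$-dimensional stabiliser $S$ of $\varepsilon$ in $\operatorname{GL}^+(2,\mathbb{R})$ as residual symmetry. One then describes the algebraic set $\{\Gamma\in\mathcal{Z}(p,q):\rho(\Gamma)=\varepsilon\}$, cut out of $\mathbb{R}^6$ by three quadratic equations, chooses $S$-invariant coordinates on it so as to present $\mathfrak{Z}^+(p,q)$ as a finite quotient of an explicit $2$-parameter family, and computes $\psi_3$, $\Psi_3$, $\chi$ as real-analytic functions of these two parameters. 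Since the $S$-invariants of the symmetric tensor $\rho^3$ are precisely its trace and determinant relative to $\varepsilon$, the pair $(\psi_3,\Psi_3)$ already records $\rho^3$ modulo $S$; inverting the resulting explicit system recovers the two parameters up to the finite residual symmetry, while the additional datum $\chi$, equivalently the orientation, resolves that remaining ambiguity. This at once yields injectivity of $\Xi$ on $\mathfrak{Z}^+(p,q)$, injectivity of $\Theta$ on $\mathfrak{Z}(p,q)$, and the fact that $\mathfrak{Z}^+(p,q)\to\mathfrak{Z}(p,q)$ is a ramified double cover branched along $\{\chi=0\}$.

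Finally, with the invariants expressed as explicit functions on the slice, the images $\Xi(\mathfrak{Z}^+(p,q))\subset\mathbb{R}^3$ and $\mathfrak{V}(p,q)=\Theta(\mathfrak{Z}(p,q))\subset\mathbb{R}^2$ are read off as the loci of the ensuing relations and inequalities, and one checks by direct inspection that $\mathfrak{V}(p,q)$ is a simply connected closed subset of $\mathbb{R}^2$ and that $\Xi$ maps $\mathfrak{Z}^+(p,q)$ homeomorphically onto a closed surface in $\mathbb{R}^3$. The hard part will be the slice analysis itself: selecting coordinates on the slice so that the inversion of $(\psi_3,\Psi_3,\chi)$ is transparent, and, most delicately, treating signature $(2,0)$ near the orbit $C_0$, where $\operatorname{GL}^+(2,\mathbb{R})$ fails to act freely and one must verify that $[\Gamma_0]$ maps to an interior point of the image consistent with the $\mathbb{Z}_3$-orbifold structure of Theorem~\ref{T1.5}; pinning down the precise shape of $\mathfrak{V}(p,q)$ and confirming its simple connectivity in all three signatures is the remaining bookkeeping.
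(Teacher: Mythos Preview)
The paper does not contain a proof of Theorem~\ref{T1.6}. The result is quoted from Brozos-V\'azquez, Garc\'{\i}a-R\'{\i}o, and Gilkey \cite{BGGP16a} ``for the sake of completeness'' (see the sentence immediately preceding the theorem), and the paper uses it only to interpret Figure~1.1 and to motivate the analysis of the moduli spaces carried out in Sections~\ref{S2}--\ref{S4}. Consequently there is no in-paper argument against which to compare your proposal.

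That said, your outline is broadly the right strategy and is in the spirit of how such results are proved in \cite{BGGP16a}: invariance of $\psi_3$ and $\Psi_3$ follows because the Type~$\mathcal{A}$ transition functions are affine and the Christoffel symbols therefore transform tensorially under the linear part, while the normalisation $\rho=\varepsilon$ reduces the problem to the action of $\operatorname{SO}^+(\varepsilon)$ on a $3$-dimensional level set. Two points deserve sharpening. First, what you call a ``finite quotient of an explicit $2$-parameter family'' is really a quotient by the $1$-dimensional residual group $\operatorname{SO}^+(\varepsilon)$; one must choose a genuine $2$-dimensional cross-section for that action and then track only a finite residual stabiliser, so your phrasing conflates two separate reductions. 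Second, the assertion that ``$(\psi_3,\Psi_3)$ already records $\rho^3$ modulo $S$, and inverting recovers the parameters'' is the entire content of the theorem and is not automatic: knowing the $S$-orbit of $\rho^3$ does not a priori determine the $S$-orbit of $\Gamma$, since the map $\Gamma\mapsto\rho^3$ is quadratic and could in principle collapse distinct orbits. Establishing injectivity requires the explicit case-by-case computation that you defer, and that computation, together with the description of the images $\mathfrak{V}(p,q)$ via the boundary curves $\sigma_\pm$, is precisely what is carried out in \cite{BGGP16a} rather than in the present paper.
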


We wish to make this description of the moduli spaces as specific as possible. Consider the curves
$\sigma_\pm(t):=(\pm4t^2\pm\frac1{t^2}+2,4t^4\pm4t^2+2)$.
The curve $\sigma_+$ is smooth; the curve $\sigma_-$ has a cusp at $(-2,1)$ when $t=\frac1{\sqrt2}$. This
corresponds to the structure $\Gamma_0$ given above in Equation~(\ref{E1.a}).
The curves $\sigma_\pm$ divide the plane into 3 open regions $\mathfrak{O}(2,0)$, $\mathfrak{O}(1,1)$, and $\mathfrak{O}(0,2)$. 
The region  $\mathfrak{O}(2,0)$ lies in the second quadrant and is bounded on the right
by $\sigma_-$; the region $\mathfrak{O}(0,2)$ lies in the first quadrant and is bounded on the left by $\sigma_+$;
the region $\mathfrak{O}(1,1)$ lies in between and is bounded on the left by $\sigma_-$ and on the right by $\sigma_+$.
The regions $\mathfrak{D}(p,q)=\Theta(p,q)\{\mathfrak{Z}(p,q)\}$ discussed in Theorem~\ref{T1.6} are the closure of 
the open sets $\mathfrak{O}(p,q)$. 
We picture below $\mathfrak{D}(2,0)$ in blue,
$\mathfrak{D}(1,1)$ in white, and $\mathfrak{D}(0,2)$ in red:
\goodbreak\centerline{Figure 1.1}
\vbox{\par\centerline{\includegraphics[height=3.5cm,keepaspectratio=true]{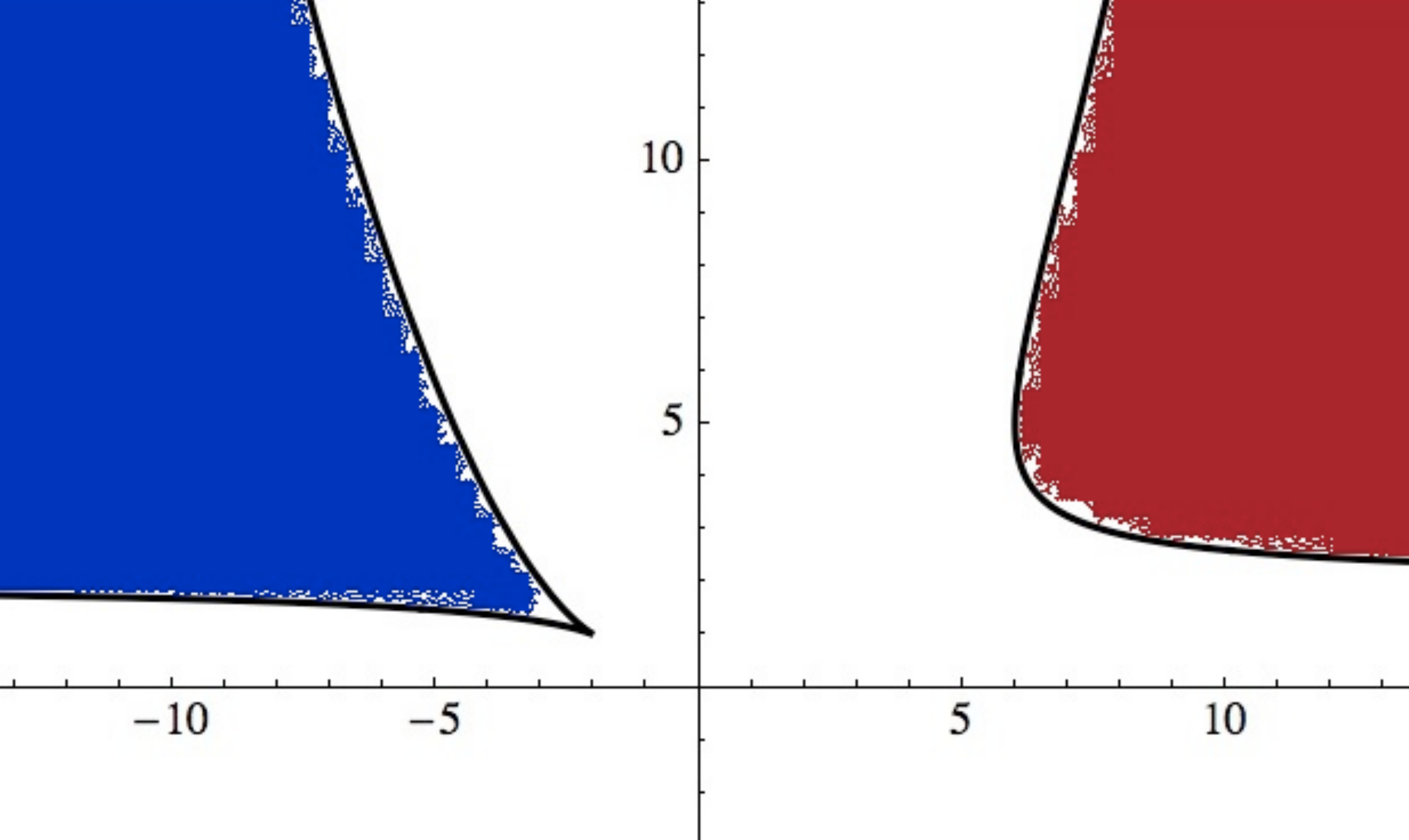}}}
\medbreak\noindent
Note that although
$\Theta(p,q)$ is 1-1 on $\mathfrak{Z}(p,q)$,
$\Theta(0,2)(\mathfrak{Z}(0,2))$ intersects $\Theta(1,1)(\mathfrak{Z}(1,1))$
along their common boundary $\sigma_+$ and $\Theta(2,0)(\mathfrak{Z}(2,0))$ intersects $\Theta(1,1)(\mathfrak{Z}(1,1))$
along their common boundary $\sigma_-$. This does not mean that $\mathfrak{Z}(2,0)$ or $\mathfrak{Z}(0,2)$ intersects
$\mathfrak{Z}(1,1)$ nor does it mean that $\Theta(p,q)$ is not 1-1 on these sets separately. Although it appears from the picture
that $\mathfrak{Z}(1,1)$ has a cusp singularity, Theorem~\ref{T1.3} shows that this is not the case and
the apparent cusp is an artifact of the parametrization in terms of the given invariants. On the other hand, $\mathfrak{Z}(2,0)$
really does have a singularity at $\Gamma_0$ by Theorem~\ref{T1.5}.

\section{Proper and fixed point free group actions}\label{S2}
\subsection{Principal bundles}
Let $G$ be a Lie group which acts smoothly on a manifold $N$. One says the action of $G$ on $N$ is {\it proper}
if given $P_n\in N$ and $g_n\in G$ with $P_n\rightarrow P$ and $g_nP_n\rightarrow\tilde P$, there is a subsequence
so $g_{n_k}\rightarrow g\in G$. If $P\in N$, let $G_P:=\{g\in G:gP=P\}$ be the isotropy subgroup. One says the action is
{\it fixed point free} if $G_P=\{\operatorname{id}\}$ for any $P\in N$.

\begin{lemma}\label{L2.1} Let $G$ be a Lie group acting smoothly on a smooth manifold $N$. 
\begin{enumerate}
\item If $G_P$ is a discrete subgroup of $G$, then $\Psi_P:g\rightarrow g\cdot P$ is a smooth immersion of $G$ into $N$.
\item If the action of $G$ on $N$ is proper and fixed point free,
then $N/G$ is a smooth manifold and
$N\rightarrow N/G$ is a principal $G$ bundle.
\end{enumerate}
\end{lemma}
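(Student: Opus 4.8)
The plan is to handle both assertions via the classical slice‑theorem machinery. For Assertion (1), I would exploit the equivariance of the orbit map: $\Psi_P(gh)=g\cdot\Psi_P(h)$, i.e. $\Psi_P\circ L_g=\ell_g\circ\Psi_P$, where $L_g$ is left translation on $G$ and $\ell_g$ is the action of $g$ on $N$. Differentiating and using that $dL_g$ and $d\ell_g$ are linear isomorphisms shows $d\Psi_P$ has constant rank on $G$, so it suffices to examine $d\Psi_P|_{\operatorname{id}}\colon\mathfrak{g}\to T_PN$. Its kernel is the Lie algebra of the (closed, hence embedded Lie) isotropy subgroup $G_P$; since $G_P$ is discrete this Lie algebra vanishes, $d\Psi_P|_{\operatorname{id}}$ is injective, and by the constant rank observation $\Psi_P$ is an immersion everywhere.

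For Assertion (2), I would first record the soft consequences of properness: the orbit equivalence relation is a closed subset of $N\times N$, so $N/G$ is Hausdorff, and since $N\to N/G$ is an open map and $N$ is second countable, $N/G$ is second countable as well. Because the action is also fixed point free, $G_P=\{\operatorname{id}\}$ is discrete, so Assertion (1) applies and each orbit map $\Psi_P$ is an injective immersion; properness of the action makes $\Psi_P$ a proper map, and a proper injective immersion is an embedding, so each orbit $G\cdot P$ is a closed embedded submanifold of $N$ diffeomorphic to $G$.

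The heart of the matter is the construction of a slice at each $P\in N$: a submanifold $S\ni P$ with $T_PN=T_P(G\cdot P)\oplus T_PS$ such that $\mu\colon G\times S\to N$, $\mu(g,s):=g\cdot s$, is a diffeomorphism onto an open $G$‑invariant neighborhood $U$ of $G\cdot P$. One chooses a complement to $T_P(G\cdot P)$ in $T_PN$ and exponentiates with respect to an auxiliary Riemannian metric; the inverse function theorem shows $\mu$ is a local diffeomorphism near $(\operatorname{id},P)$, and the delicate point is to shrink $S$ so that $\mu$ becomes globally injective, equivalently so that $g\cdot s=s'$ with $s,s'\in S$ forces $g=\operatorname{id}$. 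This is precisely where properness (not merely closedness of orbits) is needed: if no such shrinking were possible, one would obtain $s_n,s_n'\to P$ and $g_n\ne\operatorname{id}$ with $g_n\cdot s_n=s_n'$, whence properness yields a convergent subsequence $g_n\to g$ with $g\cdot P=P$, so $g=\operatorname{id}$, contradicting local injectivity near $(\operatorname{id},P)$. I expect this shrinking step to be the main obstacle in writing a self‑contained proof.

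Granting the slice, the composite $S\hookrightarrow U\to U/G$ is a homeomorphism onto an open subset of $N/G$; taking these as charts makes $N/G$ a smooth manifold of dimension $\dim N-\dim G$, with transition functions smooth because on overlaps they are induced by the smooth maps $\mu$ and their inverses. Finally, the trivializations $\mu\colon G\times S\to U$ are $G$‑equivariant for the left action on the first factor and fiberwise over $U/G$, so they exhibit $N\to N/G$ as a principal $G$‑bundle. (This is essentially Palais' slice theorem, which I would cite if a more compressed treatment were preferred.)
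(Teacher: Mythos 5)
Your proposal is correct and follows essentially the same route as the paper: Assertion (1) via showing $d\Psi_P$ is injective at the identity because the stabilizer is discrete (the paper proves this with the one-parameter subgroups $t\mapsto\exp(t\xi)\cdot P$, which is exactly the content of your "kernel equals the Lie algebra of $G_P$" step) and then propagating by equivariance, and Assertion (2) via a slice $\Sigma$ with the map $(g,s)\mapsto g\cdot s$ made injective on a shrunken slice by the identical sequential compactness/properness contradiction argument. The only differences are cosmetic: you additionally record Hausdorffness and second countability of the quotient, which the paper leaves implicit.
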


\begin{proof} Although this result is well-known, we sketch the proof briefly
to introduce the notation we shall need subsequently. Let $\mathfrak{g}:=T_{\operatorname{id}}(G)$ be
the Lie algebra of $G$. Let $\exp:g\rightarrow G$ be the exponential map.
If $0\ne\xi\in\mathfrak{g}$, let $\gamma_{\xi,P}(t):=\exp(t\xi)\cdot P$.
We wish to show that $\dot\gamma_{\xi,P}(0)\ne0$; we suppose to the contrary that $\dot\gamma_{\xi,P}(0)=0$
\and argue for a contradiction.
Since $\exp((t+s)\xi)=\exp(t\xi)\exp(s\xi)$, one has that
$\gamma_{\xi,P}(t+s)=\exp(t\xi)\cdot\gamma_{\xi,P}(s)$ and $\dot\gamma_{\xi,P}(t)=\exp(t\xi)_*\dot\gamma_{\xi,P}(0)=0$. 
Since $\dot\gamma_{\xi,P}(t)=0$ for all $t$, 
$\gamma_{\xi,P}(t)$ is the constant map. 
This contradicts the hypothesis that $G_P$ is a discrete subgroup of $G$.
Thus we conclude that $\dot\gamma_{\xi,P}(0)\ne0$ so $d\Psi_P(0)$
is an injective map from $\mathfrak{g}$ to $T_PN$. This shows that
$\Psi_P$ is an immersion near $g=\operatorname{id}$.
We use the group action to see that $\Psi_P$ is an immersion near any point of $G$.

Suppose the action is fixed point free and proper. Let $\Sigma$ be the germ of a submanifold which is transverse to the orbit
$\mathcal{O}_P:=G\cdot P$ at $P$; $\Sigma$ is called a {\it slice}.  Let $\Phi(g,s):=g\cdot s$ for $g\in G$ and  $s\in\Sigma$.
Since $\Sigma$ is transverse to $\mathcal{O}_P$ at $P$, Assertion~1 implies that $\Psi_*$ is an isomorphism from
$T_{\operatorname{id}}(G)\times T_P(\Sigma)$ to $T_PN$. We use the transitive group action on $\mathcal{O}_P$ 
to see that $\Phi$ is an immersion from $G\times\Sigma\rightarrow N$. 
We wish to show that $\Phi$ is an embedding if we restrict to a sufficiently small neighborhood of $P$ in $\Sigma$. Suppose, to the contrary, that no such neighborhood exists. This implies that there exists
$(g_n,P_n,\tilde g_n,\tilde P_n)$ so that
$$\begin{array}{llll}
P_n\in\Sigma,& \tilde P_n\in\Sigma, &g_n\in G,&\tilde g_n\in G,\\
P_n\rightarrow P,&\tilde P_n\rightarrow P,&g_nP_n=\tilde g_n\tilde P_n,&
(g_n,P_n)\ne(\tilde g_n,\tilde P_n)\,.
\end{array}$$
By replacing $g_n$ by $\tilde g_n^{-1}g_n$, we may assume $\tilde g_n=1$. 
Since the action is proper, we can choose a subsequence of the
$g_n$ which converges to $g$.
Since $P_n\rightarrow P$ and $\tilde P_n\rightarrow P$, we have $gP=P$. Since the action is fixed point free,
$g=\operatorname{id}$ and thus $g_n\rightarrow\operatorname{id}$. This
is contradicts the fact that $G\cdot\Sigma\rightarrow N$ is an immersion. Thus if we restrict $\Sigma$ suitably,
$G\times\Sigma$ may be identified with a suitable closed subset of $N$ which is a neighborhood of $G\cdot P$. 
This gives the requisite principal bundle charts;
the projection of $\Sigma$ to $N/G$ is 1-1 and gives the appropriate charts on the quotient $N/G$; the transition
maps between these charts are smooth.\end{proof}

\subsection{The action of $\operatorname{GL}^+(2,\mathbb{R})$ on $\mathcal{Z}(p,q)$}
If $\Gamma\in\mathcal{Z}(p,q)$, set:
\medbreak\qquad\qquad
$\operatorname{SO}^+(\varrho_{s,\Gamma}):
=\{T\in\operatorname{GL}^+(2,\mathbb{R}):T^*\varrho_{s,\Gamma}=\varrho_{s,\Gamma}\}$,
\medbreak\qquad\qquad
$G^+_\Gamma:=\{T\in\operatorname{GL}^+(2,\mathbb{R}):T\Gamma=\Gamma\}\subset\operatorname{SO}^+(\rho_{s,\Gamma})$. 
\begin{lemma}\label{L2.2}
Let $p+q=2$. Let $\Gamma_0$ be the exceptional structure of Equation~(\ref{E1.a}).
\begin{enumerate}
\item The action of  $\operatorname{GL}(2,\mathbb{R})$ on $\mathcal{W}(p,q)$ and on $\mathcal{Z}(p,q)$ is proper
\item $C_0:=\operatorname{GL}^+(2,\mathbb{R})\cdot\Gamma_0$ is a closed subset of $\mathcal{Z}(2,0)$ and of $\mathcal{W}(2,0)$.
\item If $\Gamma\in\mathcal{Z}(p,q)$ satisfies $G_\Gamma^+\ne\{\operatorname{id}\}$, then 
$(p,q)=(2,0)$ and $\Gamma\in C_0$.
\end{enumerate}
\end{lemma}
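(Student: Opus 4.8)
The plan is to prove (1) first --- it carries essentially all of the work --- then derive (2) in two lines, and prove (3) by a separate, more elementary argument that again invokes (1). Throughout I would use that the $\operatorname{GL}(2,\mathbb{R})$-action on Christoffel symbols is just the action on the underlying $(1,2)$-tensor $\Gamma$, so that $\rho_{s,g\Gamma}$ is the congruence transform of $\rho_{s,\Gamma}$ by $g$; consequently the signature of $\rho_{s,\Gamma}$ is invariant (so $\mathcal{W}(p,q)$, $\mathcal{Z}(p,q)$ and the torsion-free locus are invariant), $G^+_{g\Gamma}=gG^+_\Gamma g^{-1}$, the orbit $C_0$ is invariant, and a scalar matrix $\lambda\operatorname{id}$ acts on $\Gamma$ by multiplication by $\lambda$.

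For (1) it suffices to treat $\mathcal{W}(p,q)$, since $\mathcal{Z}(p,q)$ sits inside it as a closed invariant subset. Given $\Gamma_n\to\Gamma$ and $g_n\Gamma_n\to\tilde\Gamma$ with all points in $\mathcal{W}(p,q)$, I would first note that $\det\rho_{s,g\Gamma}=(\det g)^{\pm2}\det\rho_{s,\Gamma}$ together with non-degeneracy of the two limits bounds $|\det g_n|$ above and away from $0$; the remaining task is to bound $\|g_n\|$. Assuming for contradiction that $\|g_n\|\to\infty$ along a subsequence, I would use the singular value decomposition $g_n=U_nD_nW_n$ with $U_n,W_n\in\operatorname{O}(2)$ (converging, after a further subsequence, to $U,W$) and $D_n=\operatorname{diag}(s_n,t_n)$; then $s_n=\|g_n\|\to\infty$ and, since $s_nt_n=|\det g_n|$ is bounded, $t_n\to0$. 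Writing out the action of the diagonal matrix $D_n$ componentwise, exactly four of the eight Christoffel components get multiplied by factors tending to $\infty$ (which four is explicit once the convention for the action is fixed). Since $g_n\Gamma_n=U_n(D_n(W_n\Gamma_n))$ is bounded, $U_n\to U$ is invertible, and $W_n\Gamma_n\to W\Gamma$, those four components of $W\Gamma$ must vanish; but a short direct computation shows that any $\Gamma'$ with those four components zero has $\operatorname{rank}\rho_{s,\Gamma'}\le 1$, contradicting $W\Gamma\in\mathcal{W}(p,q)$. Hence $\|g_n\|$ is bounded, and with the lower bound on $|\det g_n|$ a subsequence of $g_n$ converges in $\operatorname{GL}(2,\mathbb{R})$. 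For (2): $\Gamma_0$ is torsion free, so $C_0\subset\mathcal{Z}(2,0)\subset\mathcal{W}(2,0)$, and if $g_n\Gamma_0\to\Gamma_\infty$ in $\mathcal{W}(2,0)$ then (1) applied to the constant sequence $\Gamma_0$ gives $g_{n_k}\to g$ with $\det g=\lim\det g_{n_k}>0$, so $\Gamma_\infty=g\Gamma_0\in C_0$; thus $C_0$ is closed in $\mathcal{W}(2,0)$, a fortiori in $\mathcal{Z}(2,0)$.

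For (3), take $\Gamma\in\mathcal{Z}(p,q)$ with $G^+_\Gamma\ne\{\operatorname{id}\}$; then $\Gamma\ne0$ (else $\rho_s=0$) and $G^+_\Gamma\subset\operatorname{SO}^+(\rho_{s,\Gamma})$. Using $G^+_{g\Gamma}=gG^+_\Gamma g^{-1}$ and invariance of $C_0$, I would apply a $g\in\operatorname{GL}^+(2,\mathbb{R})$ putting $\rho_{s,\Gamma}$ in standard form and then split by signature. For $(1,1)$: in null coordinates $\operatorname{SO}^+(\rho_{s,\Gamma})=\{\operatorname{diag}(a,a^{-1}):a\in\mathbb{R}^{\times}\}$, and $\operatorname{diag}(a,a^{-1})\cdot\Gamma=\Gamma$ with $a\ne1$ multiplies each $\Gamma_{ij}{}^k$ by a power $a^{\pm1}$ or $a^{\pm3}$, all $\ne1$, forcing $\Gamma=0$: impossible, so this signature does not occur. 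For definite signature: $\operatorname{SO}^+(\rho_{s,\Gamma})=\operatorname{SO}(2)$, and I would decompose the real $\operatorname{SO}(2)$-module $V^{*}\otimes V^{*}\otimes V$ as $\mathbb{R}^2_{[3]}\oplus(\mathbb{R}^2_{[1]})^{\oplus3}$, where $R_\theta$ acts on $\mathbb{R}^2_{[m]}$ as rotation by $m\theta$. A nontrivial $R_\theta\in G^+_\Gamma$ annihilates the weight-$1$ part of $\Gamma$, and also the weight-$3$ part unless $\theta\in\tfrac{2\pi}{3}\mathbb{Z}$; as $\Gamma\ne0$, $R_\theta$ must have order $3$ and $\Gamma\in\mathbb{R}^2_{[3]}\setminus\{0\}$. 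For such $\Gamma$ a weight count (the Ricci map is quadratic, $\Gamma$ carries only weights $\pm3$, so $\rho_\Gamma$ has weights in $\{0,\pm6\}$, while a $2$-tensor has weights in $\{0,\pm2\}$) shows $\rho_{s,\Gamma}$ is $\operatorname{SO}(2)$-invariant, hence a nonzero multiple of the standard positive-definite form; computing the Ricci tensor of $\Gamma_0$, which comes out negative definite, fixes the sign, so $\rho_{s,\Gamma}$ is negative definite and the signature is $(2,0)$, ruling out $(0,2)$. Finally $\operatorname{SO}(2)$ acts transitively on the circle of $\Gamma'\in\mathbb{R}^2_{[3]}$ with $\rho_{s,\Gamma'}$ in standard form, a circle that contains a scalar multiple of $\Gamma_0$ (one checks $R_{2\pi/3}\Gamma_0=\Gamma_0$, so $\Gamma_0\in\mathbb{R}^2_{[3]}$); hence $\Gamma\in\operatorname{SO}(2)\cdot(\mathbb{R}_{>0}\cdot\Gamma_0)\subset\operatorname{GL}^+(2,\mathbb{R})\cdot\Gamma_0=C_0$.

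The hard part will be the bound on $\|g_n\|$ in (1). For definite signature it is free: $\operatorname{GL}(2,\mathbb{R})$ acts properly on the space of definite symmetric bilinear forms (stabilizer $\operatorname{O}(2)$ compact) and $\Gamma\mapsto\rho_{s,\Gamma}$ is a continuous equivariant map there. But for signature $(1,1)$ the stabilizer of $\rho_{s,\Gamma}$ is non-compact, and the antisymmetric part of the Ricci tensor and the associated Ricci endomorphism do not fix this --- they too are preserved by the non-compact boost group --- so the Christoffel tensor itself has to be used, and the singular-value degeneration above is what makes the argument uniform across all signatures.
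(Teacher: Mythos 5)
Your proof is correct, and the properness argument takes a genuinely different route from the paper's. The paper first normalizes: it applies Gram--Schmidt to replace $g_n$ by elements with $\rho_{s,\Gamma_n}=\rho_{s,g_n\Gamma_n}=\varrho$ fixed, so that $g_n\in\operatorname{SO}^+(\varrho)$; properness is then immediate for definite signature (compact stabilizer) and only the $(1,1)$ case needs work, where $g_n=\operatorname{diag}(a_n,a_n^{-1})$ with $|a_n|\to\infty$ is excluded by exactly the weight computation you use. Your version skips the normalization entirely: the determinant identity pins $|\det g_n|$ above and below, the singular value decomposition reduces an unbounded subsequence to $\operatorname{diag}(s_n,t_n)$ with $s_n\to\infty$, $t_n\to 0$, and the same four weighted Christoffel components are forced to vanish in the limit, degenerating $\rho_s$. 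This buys uniformity across all signatures (you never have to treat $(1,1)$ separately, and you correctly note that no equivariant normalization by a tensor with non-compact stabilizer could substitute for using $\Gamma$ itself), at the cost of having to check that the blown-up singular directions interact correctly with the orthogonal factors $U_n,W_n$ --- which you do, since $W\Gamma\in\mathcal{W}(p,q)$ is all that is needed. The final computation (those four components zero $\Rightarrow$ $\rho_{11}=\rho_{12}=\rho_{21}=0$) is verbatim the paper's. Your Assertion~2 is the paper's (proper actions have closed orbits). For Assertion~3 the two arguments coincide in substance --- both complexify and exploit that the weights $\epsilon_{ijk}$ lie in $\{\pm1,\pm3\}$, forcing a nontrivial isotropy element to have order $3$ and $\Gamma$ to lie in the weight-$\pm3$ summand --- but your representation-theoretic shortcut (the quadratic Ricci map sends weight-$\pm3$ input to weights $\{0,\pm6\}$, so $\rho_s$ must be $\operatorname{SO}(2)$-invariant) replaces the paper's explicit solve of the linear system for $\tilde\Gamma$ in terms of $\beta$, and your transitivity argument on $\mathbb{R}^2_{[3]}\setminus\{0\}$ replaces the paper's explicit rescaling to land on $\Gamma_0$. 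Both refinements are sound; note only that the sign convention you adopt (negative definite $=$ signature $(2,0)$) is the one the paper itself uses when it computes $\rho_s=\operatorname{diag}(-1,-1)$ for $\Gamma_0$, so you are consistent with the source.
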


\begin{proof} We wish to prove the action is proper. Since $\operatorname{GL}^+(2,\mathbb{R})$ is a subgroup of finite
index in $\operatorname{GL}(2,\mathbb{R})$, it suffices to prove the action of $\operatorname{GL}^+(2,\mathbb{R})$
on $\mathcal{W}(p,q)$ is proper; it will then follow that the action of $\operatorname{GL}(2,\mathbb{R})$ is proper.
Restricting to $\mathcal{Z}(p,q)$ then yields a proper action as well.

Suppose that there exists $g_n\in\operatorname{GL}^+(2,\mathbb{R})$ and
that there exist $\Gamma_n$, $\Gamma$, and $\tilde\Gamma$ in $\mathcal{W}(p,q)$ so that $\Gamma_n\rightarrow\Gamma$
and $g_n\Gamma_n\rightarrow\tilde\Gamma$. To prove Assertion~1, we
must extract a subsequence $g_{n_k}$ which is convergent. Let $\varrho:=\rho_{s,\tilde\Gamma}$; by hypothesis,
$\varrho$ is a non-degenerate symmetric bilinear form of signature $(p,q)$. Let $\tilde\Gamma_n:=g_n\Gamma_n$. Since
$\tilde\Gamma_n\rightarrow\tilde\Gamma$, we have $\rho_{s,\tilde\Gamma_n}\rightarrow\varrho$. For $n$ large, we
can apply the Gram-Schmidt process to find $h_n\in\operatorname{GL}^+(2,\mathbb{R})$ with $h_n\rightarrow\operatorname{id}$
and $h_n\rho_{s.\tilde\Gamma_n}=\varrho$; it is necessary to take $n$ large to ensure $\rho_{s,\tilde\Gamma_n}$ is
close to $\varrho$ and thus we are not dividing by zero when applying the Gram-Schmidt process as $\varrho$ could be indefinite.
We then have $\rho_{h_ng_n\Gamma_n}=\varrho$ and $h_ng_n\Gamma_n\rightarrow\tilde\Gamma$. Since extracting
a convergent sequence from $h_ng_n$ is equivalent to extracting a convergent sequence from $g_n$, we may assume
without loss of generality that $\rho_{s,g_n\Gamma_n}=\varrho$ for all $n$. 

We have also that $\tilde\Gamma_n\rightarrow\tilde\Gamma$ and $g_n^{-1}\tilde\Gamma_n\rightarrow\Gamma$.
Choose $h$ so $h\rho_{\Gamma}=\varrho$. Extracting a convergent sequence from amongst the $g_n$ is equivalent
to extracting a convergent sequence from amongst the $hg_n^{-1}$. Thus we may assume $\rho_{s,\Gamma}=\varrho$
as well without altering the normalizations $\rho_{\tilde\Gamma_n}=\rho_{\tilde\Gamma}=\varrho$. 
We clear the previous notation and apply the Gram-Schmidt process to find $h_n\in\operatorname{GL}^+(2,\mathbb{R})$
so $h_n\rightarrow\operatorname{id}$ and $h_n\rho_{g_n^{-1}\tilde\Gamma_n}=\varrho$. Replacing the $g_n^{-1}$ by $h_ng_n^{-1}$,
we may assume 
$$
\rho_\Gamma=\varrho\quad\rho_{\tilde\Gamma}=\varrho,\quad\rho_{\Gamma_n}=\varrho,\quad\rho_{g_n\Gamma_n}=\varrho\,.
$$
This implies that the $g_n\in\operatorname{SO}^+(\varrho)$. If $(p,q)=(2,0)$ or $(p,q)=(0,2)$, then $\operatorname{SO}^+(\varrho)$
is a compact Lie group and we can extract a convergent subsequence. We therefore assume $(p,q)=(1,1)$.

Choose the basis for $\mathbb{R}^2$ so $\varrho=dx^1\otimes dx^2+dx^2\otimes dx^1$. Since $g_n\in\operatorname{SO}^+(\varrho)$,
we may conclude $g_n(x^1,x^1)=(a_nx^1,a_n^{-1}x^2)$. If $|a_n|$ and $|a_n^{-1}|$ remain uniformly bounded, we can extract
a convergent subsequence. Thus by interchanging the roles of $x^1$ and $x^2$ if necessary, we may
assume that $|a_n|\rightarrow\infty$. We argue for a contradiction. We express
\begin{equation}\label{E2.a}
\begin{array}{l}
(g_n\Gamma)_{n,ij}{}^k=a_n^{\epsilon_{ijk}}\Gamma_{n,ij}{}^k\text{ for }\\[0.05in]
\epsilon_{ijk}:=\delta_{1i}-\delta_{2i}+\delta_{1j}-\delta_{2j}-\delta_{1k}+\delta_{2k}\in\{\pm1,\pm3\}\,.
\end{array}\end{equation}
Since $(g_n\Gamma_n)_{ij}{}^k$ converges to $\tilde\Gamma_{ij}{}^k$ and $\Gamma_{n,ij}{}^k$ converge
to $\Gamma_{ij}{}^k$, we have $\Gamma_{ij}{}^k=0$ for $\epsilon_{ijk}>0$.
Consequently,
$\Gamma_{11}{}^1=0$, $\Gamma_{11}{}^2=0$, $\Gamma_{12}{}^2=0$ and $\Gamma_{21}{}^2=0$.
We may then compute that $\rho_{11}=\rho_{12}=\rho_{21}=0$ which is impossible. This contradiction completes the proof of Assertion~1.
Since the action by $\operatorname{GL}^+(2,\mathbb{R})$ is proper, any orbit is closed. This establishes Assertion~2. 

To prove Assertion~3, we examine the isotropy group. Assume that there exists $\operatorname{id}\ne g\in G_\Gamma^+$. 
Since $g\Gamma=\Gamma$, $g\rho_{s,\Gamma}=\rho_{s,\Gamma}$ so $g\in\operatorname{SO}^+(\rho_{s,\Gamma})$.
If $\rho_{s,\Gamma}$ has indefinite signature, then we can choose the coordinates
so $gx^1=ax^1$ and $gx^2=a^{-1}x^1$ for $a\ne1$. Adopt the notation of Equation~(\ref{E2.a}).
We have $(g\Gamma)_{ij}{}^k=a^{\epsilon_{ijk}}\Gamma_{ij}{}^k$. Since $\epsilon_{ijk}$ is odd and $a\ne1$,
setting $g\Gamma=\Gamma$ implies all the $\Gamma_{ij}{}^k$ vanish which is false.
Thus the action is fixed point free in signature $(1,1)$.

Suppose the signature is definite. Introduce a complex basis
\begin{equation}\label{E2.b}
\begin{array}{llll}
f_1:=e_1+\sqrt{-1}e_2,&f_2:=e_1-\sqrt{-1}e_2,&gf_1=\alpha f_1,&gf_2=\bar\alpha f_2,\\
f^1:=\frac12(e^1-\sqrt{-1}e^2),&f^2:=\frac12(e^1+\sqrt{-1}e^2)&gf^1=\bar\alpha f^1,&gf^2=\alpha f^2
\end{array}\end{equation}
for $\alpha=e^{\sqrt{-1}\theta}$ appropriately chosen to describe the rotation involved. Since $\operatorname{id}\ne g$,
$\alpha\ne1$. Let $\tilde\Gamma_{ij}{}^k$ reflect the Christoffel symbols relative to this complex basis. Adopt the notation
of Equation~(\ref{E2.a}). We
then have $(g\tilde\Gamma)_{ij}{}^k=\alpha^{\epsilon_{ijk}}\tilde\Gamma_{ij}{}^k$.
This implies $\tilde\Gamma_{ij}{}^k=0$ if $\epsilon_{ijk}=\pm1$
and furthermore, that $\alpha^3=0$. Thus there is a complex number $\beta$ so
$$\begin{array}{llll}
\tilde\Gamma_{11}{}^1=0,&\tilde\Gamma_{11}{}^2=\beta,&\tilde\Gamma_{12}{}^1=0,&\tilde\Gamma_{12}{}^2=0,\\
\tilde\Gamma_{21}{}^1=0,&\tilde\Gamma_{21}{}^2=0,&\tilde\Gamma_{22}{}^1=\bar\beta,&\tilde\Gamma_{22}{}^2=0.
\end{array}$$
Since $\tilde\Gamma_{12}{}^1=\tilde\Gamma_{21}{}^1$
and $\tilde\Gamma_{12}{}^2=\tilde\Gamma_{21}{}^2$, $\tilde\Gamma$ and hence $\Gamma$ is torsion free.
By performing a coordinate rotation, we can assume that $\beta$ is real. 
We obtain
\begin{eqnarray*}
0&=&\tilde\Gamma_{11}{}^1=\textstyle\frac12\{\Gamma_{11}{}^1+2\Gamma_{12}{}^2-\Gamma_{22}{}^1
+\sqrt{-1}(-\Gamma_{11}{}^2+2\Gamma_{12}{}^1+\Gamma_{22}{}^2)\},\\
0&=&\tilde\Gamma_{12}{}^1=\textstyle\frac12\{\Gamma_{11}{}^1+\Gamma_{22}{}^1\qquad\hspace{15.5pt}
+\sqrt{-1}(-\Gamma_{11}{}^2-\Gamma_{22}{}^2)\},\\
\beta&=&\tilde\Gamma_{11}{}^2=\textstyle\frac12\{\Gamma_{11}{}^1-2\Gamma_{12}{}^2-\Gamma_{22}{}^1
+\sqrt{-1}(+\Gamma_{11}{}^2+2\Gamma_{12}{}^1-\Gamma_{22}{}^2)\}\,.
\end{eqnarray*}
We solve these equations to obtain the relations
$$\begin{array}{lll}
\Gamma_{11}{}^1=\frac12\beta,&\Gamma_{22}{}^1=-\frac12\beta,&\Gamma_{12}{}^2=\Gamma_{21}{}^2=-\frac12\beta,\\ [0.05in]
\Gamma_{11}{}^2=0,&\Gamma_{22}{}^2=0,&\Gamma_{12}{}^1=\Gamma_{21}{}^1=0.
\end{array}$$
By rescaling the coordinate system, we can ensure $\frac12\beta=-\frac1{\sqrt2}$ and obtain the structure of Equation~(\ref{E1.a}):
$$\begin{array}{lll}
\Gamma_{11}{}^1=-\frac1{\sqrt2},&\Gamma_{22}{}^1=\frac1{\sqrt2},&\Gamma_{12}{}^2=\Gamma_{21}{}^2=\frac1{\sqrt2},\\ [0.05in]
\Gamma_{11}{}^2=0,&\Gamma_{22}{}^2=0,&\Gamma_{12}{}^1=\Gamma_{21}{}^1=0.
\end{array}$$
One may then compute that
$\rho_s=\operatorname{diag}(-1,-1)$ which has signature $(2,0)$. This completes the proof of Assertion~3. 
\end{proof}

\subsection{The proof of Theorem~\ref{T1.3} Assertion~1}
This is a direct consequence of Lemma~\ref{L2.1} and of Lemma~\ref{L2.2}.\hfill\qed

\section{The projections $\tilde{\mathfrak{Z}}^+(p,q)\rightarrow\tilde{\mathfrak{Z}}(p,q)$
and $\tilde{\mathfrak{W}}^+(p,q)\rightarrow\tilde{\mathfrak{W}}(p,q)$}\label{S3}

\subsection{The nature of the ramification set}
Fix an element $T\in\operatorname{GL}(2,\mathbb{R})$ of order $2$ with $\det(T)=-1$.
If $g\in\operatorname{GL}^+(2,\mathbb{R})$, twist the standard action by defining $g*\Gamma:=TgT^{-1}\Gamma$.
The map $\Gamma\rightarrow T\Gamma$ then intertwines these actions and consequently $T$
descends to define a map $[T]$ of order $2$ on the moduli spaces which defines an action of $\mathbb{Z}_2$ so that
$$
\mathfrak{Z}(p,q)=\mathfrak{Z}^+(p,q)/\mathbb{Z}_2\text{ and }
\mathfrak{W}(p,q)=\mathfrak{W}^+(p,q)/\mathbb{Z}_2\,.
$$
Denote the fixed point sets by:
\begin{eqnarray*}
&&\widetilde{\mathfrak{FZ}}^+(p,q):=\{[\Gamma]\in\tilde{\mathfrak{Z}}^+(p,q):[T\Gamma]=[\Gamma]\},\\
&&\widetilde{\mathfrak{FW}}^+(p,q):=\{[\Gamma]\in\tilde{\mathfrak{W}}^+(p,q):[T\Gamma]=[\Gamma]\}\,.
\end{eqnarray*}
On the complement of the fixed point set, the projection from the oriented moduli space to the unoriented moduli space is a double
cover and the $\mathbb{Z}_2$ quotient inherits a natural smooth structure. We examine the fixed point sets as follows:

\begin{lemma}\label{L3.1}
\ \begin{enumerate}
\item $\widetilde{\mathfrak{FZ}}^+(p,q)$ is a smooth 1-dimensional submanifold of $\tilde{\mathfrak{Z}}^+(p,q)$.
\item $\widetilde{\mathfrak{FW}}^+(p,q)$ is a smooth 2-dimensional submanifold of $\tilde{\mathfrak{W}}^+(p,q)$.
\end{enumerate}\end{lemma}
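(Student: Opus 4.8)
The plan is to reduce both assertions to a fixed-point analysis of the twisted $\operatorname{GL}^+(2,\mathbb{R})$-action at the level of Christoffel symbols, and then pass to the quotient using the principal bundle structure of Theorem~\ref{T1.3} Assertion~1. First I would unwind the definition: a class $[\Gamma]\in\tilde{\mathfrak{Z}}^+(p,q)$ lies in $\widetilde{\mathfrak{FZ}}^+(p,q)$ precisely when there is some $g\in\operatorname{GL}^+(2,\mathbb{R})$ with $T\Gamma=g\Gamma$, i.e. $(g^{-1}T)\Gamma=\Gamma$. Here $g^{-1}T$ has negative determinant, so the condition is that $\Gamma$ is fixed by \emph{some} orientation-reversing element. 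Since on $\tilde{\mathcal{Z}}(p,q)$ the action is fixed point free (Lemma~\ref{L2.2} Assertion~3, after removing $C_0$), such an orientation-reversing element $S=g^{-1}T$ is unique given $\Gamma$, and it satisfies $S^2\in G^+_\Gamma=\{\operatorname{id}\}$, so $S$ is an involution with $\det S=-1$. Thus the preimage of the fixed-point set in $\tilde{\mathcal{Z}}(p,q)$ is
\[
\mathcal{F}:=\{\Gamma\in\tilde{\mathcal{Z}}(p,q): S\Gamma=\Gamma\text{ for some involution }S\text{ with }\det S=-1\}.
\]

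Next I would parametrize $\mathcal{F}$. Every involution $S$ with $\det S=-1$ is conjugate in $\operatorname{GL}(2,\mathbb{R})$ to the fixed reference $T$; writing $S=hTh^{-1}$, the condition $S\Gamma=\Gamma$ becomes $T(h^{-1}\Gamma)=h^{-1}\Gamma$, so $\mathcal{F}=\operatorname{GL}^+(2,\mathbb{R})\cdot\mathcal{F}_T$ where $\mathcal{F}_T:=\{\Gamma\in\tilde{\mathcal{Z}}(p,q):T\Gamma=\Gamma\}$ is the fixed submanifold of the single \emph{linear} involution $\Gamma\mapsto T\Gamma$ on $\mathbb{R}^8$ (intersected with the open set $\tilde{\mathcal{Z}}(p,q)$). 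A linear involution on $\mathbb{R}^8$ has fixed set a linear subspace; with $T=\operatorname{diag}(1,-1)$ one computes, exactly as in the $\epsilon_{ijk}$ bookkeeping of Equation~(\ref{E2.a}), that $(T\Gamma)_{ij}{}^k=(-1)^{\epsilon_{ijk}}\Gamma_{ij}{}^k$, so $\mathcal{F}_T$ is cut out by the vanishing of the four components with $\epsilon_{ijk}$ odd in one sign — a $4$-dimensional coordinate subspace of $\mathbb{R}^8$ (the paper's remark that $\operatorname{GL}(2,\mathbb{R})$ has a $1$-dimensional fixed set on $\mathcal{Z}(p,q)$ is the statement that this $4$-plane meets $\mathcal{Z}(p,q)$ in something whose $\operatorname{SO}$-orbit space is $1$-dimensional). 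Then $\mathcal{F}$, being the union of the $\operatorname{GL}^+$-translates of the $4$-plane $\mathcal{F}_T$, is an embedded submanifold of $\tilde{\mathcal{Z}}(p,q)$: one checks the map $\operatorname{GL}^+(2,\mathbb{R})\times_{N} \mathcal{F}_T\to\tilde{\mathcal{Z}}(p,q)$ is an injective immersion, where $N$ is the subgroup of $\operatorname{GL}^+$ preserving $\mathcal{F}_T$ (the centralizer-type subgroup $\{g:gT=Tg\}\cap\operatorname{GL}^+$, which has dimension $2$), injectivity coming from uniqueness of $S$ and properness (Lemma~\ref{L2.2} Assertion~1) giving the embedding as in the slice argument of Lemma~\ref{L2.1}. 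Hence $\dim\mathcal{F}=4+3-2=5$, and being saturated under $\operatorname{GL}^+(2,\mathbb{R})$, its image $\widetilde{\mathfrak{FZ}}^+(p,q)=\mathcal{F}/\operatorname{GL}^+(2,\mathbb{R})$ is a smooth submanifold of $\tilde{\mathfrak{Z}}^+(p,q)$ of dimension $5-3=1$, proving Assertion~1.

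For Assertion~2 the same argument runs verbatim with $\mathcal{Z}$ replaced by $\mathcal{W}$: now $\mathcal{F}_T$ is the intersection of the same coordinate $4$-plane with the open set $\tilde{\mathcal{W}}(p,q)\subset\mathbb{R}^8$ — wait, that is still at most $4$-dimensional — so I should instead say $\mathcal{F}_T$ is open in a $4$-plane, of dimension $4$, hence $\dim\mathcal{F}=4+3-2=5$ again, and $\widetilde{\mathfrak{FW}}^+(p,q)=\mathcal{F}/\operatorname{GL}^+(2,\mathbb{R})$ has dimension $5-3=2$. (The discrepancy with the paper's "$2$-dimensional fixed point set acting on $\mathcal{W}$" is the analogous remark that $\mathcal{F}_T\cap\mathcal{W}$ is genuinely $4$-dimensional, one more than in the $\mathcal{Z}$-case, so the $\operatorname{SO}$-orbit picture goes up by one; I'd double-check the exact count against the defining equations of $\mathcal{W}$ versus $\mathcal{Z}$ when writing it up.) Smoothness of the quotient in both cases is automatic once we know $\mathcal{F}$ is an embedded $\operatorname{GL}^+$-invariant submanifold of the total space of a principal bundle: the restriction of the bundle to $\mathcal{F}$ is a principal subbundle, and the base of that subbundle is exactly the fixed-point set.

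The main obstacle, and the only genuinely delicate point, is establishing that $\mathcal{F}$ is an \emph{embedded} (not merely immersed) submanifold — equivalently, that the fixed-point condition is "clean" and that distinct orientation-reversing involutions don't produce overlapping or tangent sheets of $\mathcal{F}$. This is where I would lean hardest on Lemma~\ref{L2.2}: fixed-point freeness of the $\operatorname{GL}^+$-action on $\tilde{\mathcal{Z}}(p,q)$ forces the involution $S$ associated to a point of $\mathcal{F}$ to be unique, which both makes the parametrizing map $\operatorname{GL}^+\times_N\mathcal{F}_T\to\mathcal{F}$ injective and lets the properness argument from the proof of Lemma~\ref{L2.1} (the slice construction) upgrade the injective immersion to an embedding. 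One should also verify transversality: that $\mathcal{F}_T$ as a $4$-plane is transverse inside $\tilde{\mathcal{Z}}(p,q)$ to the $N$-orbit directions complementary to the full $\operatorname{GL}^+$-orbit, which is a finite linear-algebra check on the tangent spaces using the $\epsilon_{ijk}$ weight decomposition of $\mathbb{R}^8$ under the one-parameter subgroups — routine, but it is the computation that actually pins down the dimensions $1$ and $2$.
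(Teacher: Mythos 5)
Your overall strategy --- reduce $[T\Gamma]=[\Gamma]$ to the existence of a unique orientation-reversing involution $S$ fixing $\Gamma$, conjugate $S$ to the reference $T$, and present the fixed locus upstairs as $\operatorname{GL}^+(2,\mathbb{R})\cdot\mathcal{F}_T$ --- is workable, and it is actually more explicit than the paper about why every fixed class admits a $T$-fixed representative. But there are two concrete problems. First, the dimension count for Assertion~1 does not close. The space $\mathcal{Z}(p,q)$ is the torsion-free locus, cut out by $\Gamma_{12}{}^k=\Gamma_{21}{}^k$ inside $\mathbb{R}^8$, so $\mathcal{F}_T\cap\tilde{\mathcal{Z}}(p,q)$ is open in a $3$-plane, not a $4$-plane; the paper's parametrization $\Gamma(a,c,c,d)$ versus $\Gamma(a,b,c,d)$ makes exactly this distinction. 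With your numbers ($\dim\mathcal{F}_T=4$, $\dim\mathcal{F}=5$) the quotient would be $5-3=2$, not the claimed $1$; the correct count is $3+3-2=4$ upstairs and $4-3=1$ downstairs. You flagged the $\mathcal{Z}$-versus-$\mathcal{W}$ discrepancy yourself, but as written the proof of Assertion~1 is arithmetically inconsistent. Relatedly, the formula $(T\Gamma)_{ij}{}^k=(-1)^{\epsilon_{ijk}}\Gamma_{ij}{}^k$ is false: every $\epsilon_{ijk}$ in Equation~(\ref{E2.a}) is odd, so your formula would give $T\Gamma=-\Gamma$ and a zero-dimensional fixed set. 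That equation applies only to $\operatorname{diag}(a,a^{-1})$, which $\operatorname{diag}(1,-1)$ is not; the correct weight is $(-1)^{\delta_{2i}+\delta_{2j}+\delta_{2k}}$ (or its mirror for the paper's choice $Te_1=-e_1$), which does produce a coordinate $4$-plane in $\mathbb{R}^8$.

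Second, the step you defer as routine --- that $\operatorname{GL}^+(2,\mathbb{R})\times_N\mathcal{F}_T\to\tilde{\mathcal{Z}}(p,q)$ is an immersion, equivalently that $T_\Gamma(\operatorname{GL}^+(2,\mathbb{R})\cdot\Gamma)\cap T_\Gamma\mathcal{F}_T$ consists exactly of the $N$-orbit directions --- is the real content of your route and must be carried out. It does succeed: $\operatorname{Ad}(T)$ splits $\mathfrak{gl}(2,\mathbb{R})$ into $\pm1$ eigenspaces, the orbit map at a $T$-fixed $\Gamma$ intertwines this with the $\pm1$ eigenspaces of $T$ on $\mathbb{R}^8$, and the $+1$ eigenspace of $\operatorname{Ad}(T)$ is precisely the Lie algebra of $N=G_0=\{g:gT=Tg\}\cap\operatorname{GL}^+(2,\mathbb{R})$. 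The paper avoids all of this with a different device: since $\tilde{\mathfrak{Z}}^+(p,q)$ and $\tilde{\mathfrak{W}}^+(p,q)$ are already smooth manifolds by Theorem~\ref{T1.3}(1) and $[T]$ acts smoothly on them, one averages a Riemannian metric over the $\mathbb{Z}_2$-action and uses that the fixed point set of an isometry is totally geodesic; smoothness and embeddedness are then free, and only the dimension count via $\mathcal{F}_T$ and the two-dimensional structure group $G_0$ remains. I would either adopt that shortcut or write out the eigenspace transversality argument in full; in either case, correct the $\mathcal{Z}$ dimension.
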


\begin{proof} We have excluded the exceptional orbit of $\Gamma_0$ and restrict to the oriented
moduli spaces; $[T]$ is then smooth. By averaging over the
action of $\mathbb{Z}_2$, we can put smooth Riemannian metrics on the oriented moduli spaces. The fixed point sets
are then totally geodesic submanifolds of $\tilde{\mathfrak{Z}}^+(p,q)$ and $\tilde{\mathfrak{W}}^+(p,q)$, respectively.
In particular they are smooth.

Let $Te_1=-e_1$ and $Te_2=e_2$. Then $T\Gamma=\Gamma$ implies $\Gamma=\Gamma(a,b,c,d)$ where:
$$\begin{array}{llll}
\Gamma_{11}{}^1=0,&\Gamma_{11}{}^2=a,&\Gamma_{12}{}^1=b,&\Gamma_{12}{}^2=0,\\
\Gamma_{21}{}^1=c,&\Gamma_{21}{}^2=0,&\Gamma_{22}{}^1=0,&\Gamma_{22}{}^2=d,
\end{array}$$
We exclude the exceptional orbit $C_0:=\Gamma_0\cdot\operatorname{GL}(2,\mathbb{R})$ and define: 
\begin{eqnarray*}
&&\widetilde{\mathcal{FZ}}(p,q):=\{\Gamma(a,c,c,d)\in\tilde{\mathcal{Z}}(p,q)\}\cap
C_0^c,\\
&&\widetilde{\mathcal{FW}}(p,q):=\{\Gamma(a,b,c,d)\in\tilde{\mathcal{W}}(p,q)\}\cap
C_0^c\,;
\end{eqnarray*}
$\widetilde{\mathcal{FZ}}(p,q)$ is an open subset of $\mathbb{R}^3$ and
$\widetilde{\mathcal{FW}}(p,q)$ is an open subset of $\mathbb{R}^4$ since
\begin{equation}\label{E3.a}
\rho_\Gamma=\left(\begin{array}{cc}a(d-c)&0\\0&b(c-d)\end{array}\right)\,.
\end{equation}
If $T\Gamma_1=\Gamma_1$, $T\Gamma_2=\Gamma_2$, and $g\Gamma_1=\Gamma_2$ for
$g\in\operatorname{GL}^+(2,\mathbb{R})$ for $\Gamma_i\in\widetilde{\mathcal{FW}}(p,q)$, then
$$
TgT\Gamma_1=Tg\Gamma_1=T\Gamma_2=\Gamma_2\text{ so }g^{-1}TgT\Gamma_1=\Gamma_1\,.
$$
Since we have excluded the exceptional orbit from consideration,
$\operatorname{GL}^+(2,\mathbb{R})$ acts without fixed points and $Tg=gT$. Thus the structure group in question is given by
$
G_0:=\{g\in\operatorname{GL}^+(2,\mathbb{R}):Tg=gT\}\,.
$
We compute:
\begin{eqnarray*}
&&Tg=\left(\begin{array}{rr}-1&0\\0&1\end{array}\right)\left(\begin{array}{rr}a_{11}&a_{12}\\a_{21}&a_{22}\end{array}\right)
=\left(\begin{array}{rr}-a_{11}&-a_{12}\\a_{21}&a_{22}\end{array}\right)\\
&=&gT=\left(\begin{array}{rr}a_{11}&a_{12}\\a_{21}&a_{22}\end{array}\right)\left(\begin{array}{rr}-1&0\\0&1\end{array}\right)
=\left(\begin{array}{rr}-a_{11}&a_{12}\\-a_{21}&a_{22}\end{array}\right)\,.
\end{eqnarray*}
This implies $a_{12}=a_{21}=0$ so
$$
G_0=\left\{g=\left(\begin{array}{cc}a_{11}&0\\0&a_{22}\end{array}\right):\det(g)>0\right\}
$$
is two dimensional.
By Lemma~\ref{L2.2}, the action of $G_0$ on $\widetilde{\mathcal{FZ}}(p,q)$ and on $\widetilde{\mathcal{FW}}(p,q)$ 
is proper and without fixed points. Thus the projections to the oriented moduli spaces are principal $G_0$ bundles. 
It then follows that
\medbreak\hfill
$\dim\{\widetilde{\mathfrak{FZ}}^+(p,q)\}=3-2=1$ and $\dim\{\widetilde{\mathfrak{FW}}^+(p,q)\}=4-2=2$.
\hfill\vphantom{.}
\end{proof}

\subsection{The proof of Theorem~\ref{T1.3} Assertion~2}
Let $F$ be a component of the fixed point set and let $\nu$ be the normal bundle of $F$. The $\mathbb{Z}_2$
action acts was multiplication by $-1$ on $\nu$. If $F$ has
codimension $1$, this replaces the open fiber intervals $(-\varepsilon,\varepsilon)$ by half open intervals $[0,\varepsilon^2)$
and ensures that $F$ becomes a part of the boundary of the unoriented moduli
space. If $F$ has codimension $2$, then $\nu$ is a 2-plane bundle. The analysis is local so we can assume $\nu$ is a complex
line bundle $L$. Identifying antipodal points is equivalent to passing to $L^2$ and we obtain a smooth structure on the quotient where the
double cover ramifies over $F$.\hfill\qed

\subsection{The ramification set for the projection $\widetilde{\mathfrak{FZ}}^+(p,q)\rightarrow\widetilde{\mathfrak{FZ}}(p,q)$}\label{S3.3}
Let $P^+$ belong to the fixed point set $\widetilde{\mathfrak{FZ}}^+(p,q)$. 
We may choose real local coordinates $(\eta^1,\eta^2)\in(-\epsilon,\epsilon)\times(-\epsilon,\epsilon)$
which are centered at $P$ so that $\widetilde{\mathfrak{FZ}}^+(p,q)$ is defined by setting the real fiber coordinate $\eta^2=0$. Local
coordinates for the corresponding point $P$ in the boundary of $\widetilde{\mathfrak{FZ}}(p,q)$ are then
given by taking $(\eta^1,\eta^2)\in(-\epsilon,\epsilon)\times[0,\epsilon^2)$. The ramified double cover is the fold
singularity given by $(\eta^1,\eta^2)\rightarrow(\eta^1,(\eta^2)^2)$.

\subsection{The number of boundary components in $\mathfrak{Z}(p,q)$} Let $(p,q)=(1,1)$ or $(p,q)=(0,2)$ so the
exceptional orbit plays no role. The structure group $G_0$ has two arc components corresponding to $\{a_{11}>0,a_{22}>0\}$ and
$\{a_{11}<0,a_{22}<0\}$. Thus the number of boundary components in $\mathfrak{Z}(p,q)$ is half the number
of components in $\mathcal{FZ}(p,q)$. We apply Equation~(\ref{E3.a}) setting $c=b$
so $\rho=\operatorname{diag}(a(d-b),b(b-d))$. The Ricci tensor is positive definite if
$a(d-b)>0$ and $b(b-d)>0$. This gives rise to two components $\{a>0,b<0,d>b\}$ and $\{a<0,b>0,d<b\}$.
Thus the boundary of $\mathfrak{Z}(0,2)$ is connected; this is in agreement with Figure 1.1. On the other hand,
the Ricci tensor is indefinite if $\{a(d-b)>0,b(b-d)<0\}$ or $\{a(d-b)<0,b(b-d)>0\}$. This gives rise to four
components $\{a>0,d>b,b>0\}$ or $\{a<0,d<b,b<0\}$ or $\{a>0,d>b,b<0\}$ or $\{a<0,d<b,b>0\}$. Thus
$\mathfrak{Z}_{1,1}$ has 2 boundary components. This again is in agreement
with Figure 1.1. Finally, if we ignore the effect of the singular orbit, which we will treat in the next section,
the same analysis shows $\mathfrak{Z}(2,0)$ has one boundary
component.

\subsection{The ramification set for the projection $\widetilde{\mathfrak{FW}}^+(p,q)\rightarrow\widetilde{\mathfrak{FW}}(p,q)$}\label{S3.5}
Let $B_\epsilon(0):=\{\eta\in\mathbb{C}:|\eta|<\epsilon\}$ be the open ball of radius $\epsilon$ in $\mathbb{C}$. Let $P^+$ belong to the fixed point set $\widetilde{\mathfrak{FW}}^+(p,q)$.
We may choose local complex coordinates $(\eta^1,\eta^2)\in B_\epsilon\times B_\epsilon$
which are centered at $P^+$ so that locally $\widetilde{\mathfrak{FW}}^+(p,q)$ is defined by setting
the complex fiber coordinate $\eta^2=0$. Local
coordinates for the corresponding point $P$ in $\widetilde{\mathfrak{FW}}(p,q)$ are then
given by taking $(\eta^1,\eta^2)\in B_\epsilon\times B_\epsilon$ and the ramified double cover is then the quadratic
singularity given by $(\eta^1,\eta^2)\rightarrow(\eta^1,(\eta^2)^2)$.

\section{The orbifold structure near the singular orbit $[\Gamma_0]$}\label{S4}
\subsection{Complex coordinates}
We adopt the notation of Equation~(\ref{E2.b}) and complexity:
$$\begin{array}{ll}
f_1:=e_1+\sqrt{-1}e_2,&f_2:=e_1-\sqrt{-1}e_2,\\
f^1:=\frac12(e^1-\sqrt{-1}e^2),&f^2:=\frac12(e^1+\sqrt{-1}e^2)\,.
\end{array}$$
We identify $\mathbb{R}^8=\mathbb{C}^4$
and define coordinates $\vec\alpha:=(\alpha_{11}{}^1,\alpha_{11}{}^2,\alpha_{12}{}^1, \alpha_{12}{}^2)\in\mathbb{C}^4$ 
on $\mathcal{W}(2,0)$ by defining $\tilde\Gamma(\vec\alpha)$ to be:
$$\begin{array}{llll}
\tilde\Gamma_{11}{}^1=\alpha_{11}{}^1,&\tilde\Gamma_{11}{}^2=\alpha_{11}{}^2,&
\tilde\Gamma_{12}{}^1=\alpha_{12}{}^1,&\tilde\Gamma_{12}{}^2=\alpha_{12}{}^2,\\
\tilde\Gamma_{22}{}^2=\bar\alpha_{11}{}^1,&\tilde\Gamma_{22}{}^1=\bar\alpha_{11}{}^2,&
\tilde\Gamma_{21}{}^2=\bar\alpha_{12}{}^1,&\tilde\Gamma_{21}{}^1=\bar\alpha_{12}{}^2.
\end{array}$$
The singular orbit is then $\tilde\Gamma(0,1,0,0)$;
where we suppress normalizing constant of $1/{\sqrt2}$ as it plays no role.
Similar coordinates on $\mathcal{Z}(2,0)$ taking values in $\mathbb{C}^3$ are obtained by imposing the single condition
$$
\alpha_{12}{}^1=\tilde\Gamma_{12}{}^1=\tilde\Gamma_{21}{}^1=\bar\alpha_{12}{}^2\,.
$$
We then have automatically
$\tilde\Gamma_{12}{}^2=\alpha_{12}{}^2=\bar\alpha_{12}{}^1=\tilde\Gamma_{21}{}^2$.

\subsection{A complex representation of the general linear group}
If $T\in\operatorname{GL}^+(2,\mathbb{R})$, then $T=T_{\beta_1,\beta_2}$ for $|\beta_1|^2-|\beta_2|^2>0$ where
$$\begin{array}{ll}
T_{\beta_1,\beta_2}f_1=\beta_1f_1+\beta_2f_2,&T_{\beta_1,\beta_2}f_2=\bar\beta_2f_1+\bar\beta_1f_2,\\[0.05in]
T_{\beta_1,\beta_2}f^1=\frac1{|\beta_1|^2-|\beta_2|^2}(\bar\beta_1f^1-\bar\beta_2f^2),&
T_{\beta_1,\beta_2}f^2=\frac1{|\beta_1|^2-|\beta_2|^2}(-\beta_2f^1+\beta_1f^2).
\end{array}$$
We wish to compute the tangent space to the orbit $C_0:=\operatorname{GL}^+(2,\mathbb{R})\cdot\Gamma_0$. We consider the
two curves $T_{1+t\beta,0}$ and $T_{1,t\beta}$.
$$\begin{array}{ll}
\{\partial_tT_{1+t\beta_1,0}\Gamma_0|_{t=0}\}_{11}{}^1=0,&
\{\partial_tT_{1+t\beta_1,0}\Gamma_0|_{t=0}\}_{11}{}^2=3\beta_1,\\
\{\partial_tT_{1+t\beta_1,0}\Gamma_0|_{t=0}\}_{12}{}^1=0,&
\{\partial_tT_{1+t\beta_1,0}\Gamma_0|_{t=0}\}_{12}{}^2=0,\\
\{\partial_tT_{1,t\beta_2}\Gamma_0|_{t=0}\}_{11}{}^1=-\bar\beta_2,&
\{\partial_tT_{1,t\beta_2}\Gamma_0|_{t=0}\}_{11}{}^2=0,\\
\{\partial_tT_{1,t\beta_2}\Gamma_0|_{t=0}\}_{12}{}^1=0,&
\{\partial_tT_{1,t\beta_2}\Gamma_0|_{t=0}\}_{12}{}^2=\bar\beta_2.
\end{array}$$
Thus a transversal slice $s_{\mathcal{W}}(\alpha_1,\alpha_2)$ to 
$C_0$ in $\mathcal{W}(2,0)$ can be taken to be:
$$\begin{array}{llll}
s_{\mathcal{W},11}{}^1:=0,& s_{\mathcal{W},11}{}^2:=1,&
s_{\mathcal{W},12}{}^1:=\bar\alpha_2,&s_{\mathcal{W},12}{}^2:=\alpha_1,\\[0.05in]
s_{\mathcal{W},22}{}^2:=0,& s_{\mathcal{W},22}{}^1:=1,&
s_{\mathcal{W},21}{}^2:=\alpha_2,&s_{\mathcal{W},21}{}^1:=\bar\alpha_1.
\end{array}$$
In defining the transversal slice $s_{\mathcal{Z}}(\alpha)$ over $\mathcal{Z}(2,0)$, we set $\alpha:=\alpha_1=\alpha_2$ to ensure that
$\tilde\Gamma_{12}{}^1=\tilde\Gamma_{21}{}^1$ and $\tilde\Gamma_{12}{}^2=\tilde\Gamma_{21}{}^2$:
$$\begin{array}{llll}
s_{\mathcal{Z},11}{}^1:=0,& s_{\mathcal{Z},11}{}^2:=1,&
s_{\mathcal{Z},12}{}^1:=\bar\alpha,&s_{\mathcal{Z},12}{}^2:=\alpha,\\[0.05in]
s_{\mathcal{Z},22}{}^2:=0,& s_{\mathcal{Z},22}{}^1:=1,&
s_{\mathcal{Z},21}{}^2:=\alpha,&s_{\mathcal{Z},21}{}^1:=\bar\alpha.
\end{array}$$

\subsection{The proof of Theorem~\ref{T1.5}}
Let $\lambda:=e^{2\pi\sqrt{-1}/3}$. Define an action of $\mathbb{Z}_3$ by setting
$T_\lambda f_1:=\lambda f_1$ and $T_\lambda f_2:=\bar\lambda f_2$. Then
$$\begin{array}{llll}
T_\lambda\tilde\Gamma_{11}{}^1=\lambda\tilde\Gamma_{11}{}^1,&
T_\lambda\tilde\Gamma_{11}{}^2=\tilde\Gamma_{11}{}^2,&
T_\lambda\tilde\Gamma_{12}{}^1=\bar\lambda\tilde\Gamma_{12}{}^1,&
T_\lambda\tilde\Gamma_{12}{}^2=\lambda\tilde\Gamma_{12}{}^2,\\
T_\lambda\tilde\Gamma_{22}{}^2=\bar\lambda\tilde\Gamma_{22}{}^2,&
T_\lambda\tilde\Gamma_{22}{}^1=\tilde\Gamma_{22}{}^1,&
T_\lambda\tilde\Gamma_{21}{}^2=\lambda\tilde\Gamma_{21}{}^2,&
T_\lambda\tilde\Gamma_{21}{}^1=\bar\lambda\tilde\Gamma_{21}{}^1.
\end{array}$$
The slices are equivariant with respect to this action, i.e. 
$$
T_\lambda s_{\mathcal{W}}(\alpha_1,\alpha_2)=s_{\mathcal{W}}(\lambda\alpha_1,\lambda\alpha_2)\text{ and }
T_\lambda s_{\mathcal{Z}}(\alpha)=s_{\mathcal{Z}}(\lambda\alpha)\,.
$$
The slices projects down to define local
coordinates on the oriented orbifolds where we must identify by the action of $\mathbb{Z}^3$ on $\mathbb{C}$
when dealing with $\mathfrak{Z}^+(2,0)$ and by the diagonal action of $\mathbb{Z}^3$ on $\mathbb{C}^2$
when dealing with $\mathfrak{W}^+(2,0)$.
This establishes Assertion~1 of Theorem~\ref{T1.5}.

Complex conjugation interchanges the roles of $f_1$ and $f_2$ and reverses the orientation. The slices are equivariant
with respect to the action of
complex conjugation
$$
\bar s_{\mathcal{W}}(\alpha_1,\alpha_2)=s_{\mathcal{W}}(\bar\alpha_1,\bar\alpha_2)
\text{ and }\bar s_{\mathcal{Z}}(\alpha)=s_{\mathcal{Z}}(\bar\alpha)\,.
$$
We adopt the notation of Definition~\ref{D1.4}
and let $\mathbb{Z}_3$ and complex conjugation generate the group $s_3$ which acts on $\mathbb{C}$ and on
$\mathbb{C}^2$.  The analysis of the orbifold structure performed in the orientable setting
now extends to the non-orientable setting; the role that $\mathbb{Z}_3$ plays as the orbifold group in the orientable setting is
now played by $s_3$ in the non-orientable setting.
\hfill\qed
\subsection{The boundary of $\mathfrak{Z}(2,0)$}\label{S4.4}
 We remark that the action of $s_3$ on $\mathbb{C}$ gives rise
to a corner with an angle of $\frac{2\pi}3$ at $[\Gamma_0]\in\mathfrak{Z}(2,0)$. Furthermore, the orbifold singularity
at $[\Gamma_0]\in\mathfrak{Z}^+(2,0)$ can be eliminated by using coordinates $z\rightarrow z^3$; $\mathfrak{Z}^+(2,0)$
has a smooth structure. Since the boundary of $\mathfrak{Z}(1,1)$ and $\mathfrak{Z}(0,2)$ is in fact smooth, Figure 1.1 is misleading
in this regard; the apparent cusp is a function of the parametrization using $(\psi_3,\Psi_3)$ and does not reflect the
underlying topological structure.

\section*{Acknowledgments} It is a pleasure to acknowledge useful conversations on this subject with
Professors M. Brozos-V\'{a}zquez, E. Garc\'{i}a-R\'{i}o, O. Kowalski, and J. H. Park. 
Research partially supported by project GRC2013-045 (Spain).

\end{document}